\newtheorem{teo}{Theorem}[section]
\newtheorem{lema}[teo]{Lemma}
\newtheorem{cor}[teo]{Corollary}
\newtheorem{prop}[teo]{Proposition}
\theoremstyle{definition}
\newtheorem{defi}[teo]{Definition}
\newtheorem{exem}[teo]{Example}
\newtheorem{obs}[teo]{Remark}
\def\blfootnote{\xdef\@thefnmark{}\@footnotetext} 
\date{}
\author{GILIARD SOUZA DOS ANJOS\\[0.2cm]Instituto de Matemática e Estatística - Universidade de São Paulo
\\Rua do Matão, 1010, 05508-090, São Paulo - SP, Brazil\\giliards@ime.usp.br}
\title{\Large \textbf{HALF-ISOMORPHISMS WHOSE INVERSES ARE ALSO HALF-ISOMORPHISMS}}
\begin{document}

\maketitle

\begin{abstract} 
\noindent{}Let $(G,*)$ and $(G',\cdot)$ be groupoids. A bijection $f: G \rightarrow G'$ is called a \emph{half-isomorphism} if  $f(x*y)\in\{f(x)\cdot f(y),f(y)\cdot   f(x)\}$, for any $ x, y \in G$. A half-isomorphism of a groupoid onto itself is a half-automorphism. A half-isomorphism $f$ is called \emph{special} if $f^{-1}$ is also a half-isomorphism. In this paper, necessary and sufficient conditions for the existence of special half-isomorphisms on groupoids and quasigroups are obtained. Furthermore, some examples of non-special half-automorphisms for loops of infinite order are provided.

\end{abstract}

\noindent{}{\it Keywords}: half-isomorphism, half-automorphism, special half-isomorphism, groupoid, quasigroup, loop.

\section{Introduction}

A \emph{groupoid} consists of a nonempty set with a binary operation. A groupoid $( Q,*) $ is called a \emph{quasigroup} if for each $ a, b \in Q $ the equations $ a * x  =  b $ and $ y * a =  b$ have unique solutions for $x,y \in Q$. A \textit{quasigroup} $(L,*)$ is a \emph{loop} if there exists an identity element $1\in L$ such that $ 1 * x = x = x *  1$, for any $ x \in L $. The fundamental definitions and facts from groupoids, quasigroups, and loops can be found in \cite{B71,P90}.

Let $(G,*)$ and $(G',\cdot)$ be groupoids. A bijection $f: G \rightarrow G'$ is called a \emph{half-isomorphism} if \mbox{$f(x*y)\in\{f(x)\cdot f(y),f(y)\cdot   f(x)\}$,} for any $ x, y \in G$. A half-isomorphism of a groupoid onto itself is a \emph{half-automorphism}. We say that a half-isomorphism is \emph{trivial} when it is either an isomorphism or an anti-isomorphism.

In 1957, Scott \cite{Sco57} showed that every half-isomorphism on groups is trivial. In the same paper, the author provided an example of a loop of order $8$ that has a nontrivial half-automorphism, then the result for groups can not be generalized to all loops. Recently, a similar version of Scott's result was proved for some subclasses of Moufang loops \cite{GG12,GGRS16,KSV16} and automorphic loops \cite{GA192}. A \emph{Moufang loop} is a loop that satisfies the identity $ x(y(xz))=((xy)x)z$, and an \emph{automorphic loop} is a loop in which every inner mapping is an automorphism \cite{BP56}. We note that there are Moufang loops and automorphic loops that have nontrivial half-automorphisms \cite{GG13,GA19,GPS17}.

In \cite{GA192}, the authors defined the concept of \emph{special half-isomorphism}. A half-isomorphism $f:G \rightarrow G'$ is called \emph{special} if the inverse mapping $f^{-1}:G' \rightarrow G$ is also a half-isomorphism. It is easy to construct an example of a 
half-isomorphism that is not special, as we can see below. 

\newpage
\begin{exem}
\label{ex1} Let $G = \{1,2,...,6\}$ and consider the following Cayley tables of $(G,*)$ and $(G, \cdot)$:

\begin{center}

\begin{minipage}{.35\textwidth}
 \centering
\begin{tabular}{c|cccccc}

$*$ & 1& 2& 3&4&5&6 \\
\hline
 1 &  1&2 &3&4&5&6 \\

 2 & 2&3 &4&5&6&1 \\

 3 &  3&4 &5&6&1&2\\

 4 & 4&5 &6&1&2&3 \\

 5 & 5&6 &1&2&3&4 \\

 6 & 6&1 &2&3&4&5 \\

\end{tabular}

\end{minipage} 
 \begin{minipage}{.35\textwidth}
\centering
\begin{tabular}{c|cccccc}

$\cdot $ & 1& 2& 3&4&5&6 \\
\hline
 1 &  1&2 &3&4&5&6 \\

 2 & 2&3 &4&5&6&1 \\

 3 &  3&1 &5&6&4&2\\

 4 & 4&5 &6&1&2&3 \\

 5 & 5&6 &1&2&3&4 \\

 6 & 6&4 &2&3&1&5 \\

\end{tabular}

\end{minipage}
\end{center}

Note that $(G,*)$ is isomorphic to $C_6$, the cyclic group of order $6$, and $(G,\cdot)= L$ is a nonassociative loop. Consider the mapping $f: C_6\rightarrow L$ defined by $f(x) = x$, for all $x\in G$. For $x,y\in G$ such that $x\leq y$ and $(x,y)\not = (3,5)$, we have $y*x = x*y = x\cdot y$. Furthermore, $3*5 = 5*3 = 5\cdot 3$. Thus, $f$ is a half-isomorphism. From $3\cdot 5 = 4$ and $3*5 = 5*3 = 1$, it follows that $f^{-1}(3\cdot 5)\not \in \{f^{-1}(3)*f^{-1}(5),f^{-1}(5)*f^{-1}(3)\}$, and hence $f^{-1}$ is not a half-isomorphism.
\qed 
\end{exem}

We note that providing some examples for the case of non-special half-automorphisms can be very complicated. For finite loops, every half-automorphism is special \cite[Corollary 2.7]{GA192}, and in section~\ref{sec3} we show that the same is valid for finite groupoids. 

As we can see in the example~\ref{ex1}, in general, a half-isomorphism does not preserve the structure of the loop. For instance, $C_6$ is associative and commutative and has a subgroup $H = \{1,3,5\}$, while $L$ is nonassociative and noncommutative, and $f(H)$ is not a subloop of $L$. However, the inverse mapping of a half-isomorphism can preserve some structure, like the commutative property and subloops \cite[Proposition 2.2]{GA192}. The same naturally holds for special half-isomorphisms.

This paper is organized as follows: Section~\ref{sec2} presents the definitions and basic results about half-isomorphisms. In section~\ref{sec3}, some presented results in \cite{GA192} on half-isomorphisms in loops are generalized to groupoids. In section~\ref{sec4}, the concept of \emph{principal h-groupoid} of a groupoid is defined, and then a necessary and sufficient condition for the existence of special half-isomorphisms between groupoids is obtained. Furthermore, equations related to the number of special half-automorphisms, automorphisms and anti-automorphisms of a groupoid are obtained. In section~\ref{sec5}, the concept of \emph{principal h-quasigroup} of a quasigroup is defined, and then the set of these quasigroups is described. Some examples of non-special half-automorphisms in loops are provided in section~\ref{sl}.

\section{Preliminaries}
\label{sec2}

Here, the required definitions and basic results on half-isomorphisms are stated.

\begin{defi} Let $G$ and $G'$ be groupoids. We will say that $G$ is \emph{half-isomorphic} to $G'$, denoted by $G\stackrel{H}{\cong} G'$, if there exists a special half-isomorphism between $G$ and $G'$. Note that $\stackrel{H}{\cong}$ is an equivalence relation. If $G$ is isomorphic to $G'$, we write $G \cong G'$.
\end{defi}

The next proposition assures that quasigroups half-isomorphic to loops are also loops.

\begin{prop}
\label{prop21}
Let $(G,*)$ and $(G',\cdot)$ be groupoids and $f:G \rightarrow G'$ be a half-isomorphism. If $G'$ has an identity element $1$, then $f^{-1}(1)$ is the identity element of $G$.
\end{prop}
\begin{proof}
Let $x = f^{-1}(1) \in G$. For $y\in G$, we have that $\{f(x*y),f(y*x)\}\subset \{1\cdot f(y),f(y)\cdot1\} = \{f(y)\}$. Since $f$ is a bijection, we have $x*y = y*x = y$. Therefore, $x$ is an identity element of $G$.
\end{proof}

Now, let $(G,*),(G',\cdot),(G'',\bullet)$ be groupoids, and $f:G \rightarrow G'$ and $g:G' \rightarrow G''$ be half-isomorphisms. For $x,y\in G$, we have

\begin{center}
$gf(x*y) \in \{g(f(x)\cdot f(y)),g(f(y)\cdot f(x))\} = \{gf(x)\bullet gf(y),gf(y)\bullet gf(x))\}.$
\end{center}

Thus, $gf$ is a half-isomorphism. If $f$ and $g$ are special half-isomorphisms, then $(gf)^{-1} = f^{-1}g^{-1}$ is also a special half-isomorphism.

We denote the sets of the half-automorphisms, special half-automorphisms, and trivial half-automorphisms of a groupoid $G$ by $\mathit{Half}(G)$, $\mathit{Half}_S(G)$, and $\mathit{Half_T}(G)$, respectively. Note that automorphisms and anti-automorphisms are always special half-automorphisms, and consequently $\mathit{Half_T}(G) \subset \mathit{Half}_S(G) \subset \mathit{Half}(G)$. 

For $f,g\in \mathit{Half}(G)$, we already see that $fg\in \mathit{Half}(G)$. The identity mapping $I_d$ of $G$ is the identity element of $\mathit{Half}(G)$. Thus, $\mathit{Half}(G)$ is a group if and only if it is closed under inverses, which is equivalent to $\mathit{Half}(G) = \mathit{Half}_S(G)$. In particular, $\mathit{Half}_S(G)$ is always a group.

A composition of two automorphisms or two anti-automorphisms is an automorphism, and if $f$ is an automorphism and $g$ is an anti-automorphism, then $fg$ and $gf$ are anti-automorphisms and $g^{-1}fg$ is an automorphism. Thus, $\mathit{Half_T}(G)$ is a group and the automorphism group of $G$, denoted by $Aut(G)$, is a normal subgroup of $\mathit{Half_T}(G)$. 

The following result summarizes the discussion above.

\begin{prop}
\label{prop22}
Let $G$ be a groupoid. Then:

\noindent{}(a) $\mathit{Half}_S(G)$ is a group and $\mathit{Half}_T(G)$ is a subgroup of $\mathit{Half}_S(G)$.
\\
(b) $\mathit{Half}(G)$ is a group if and only if $\mathit{Half}(G)=\mathit{Half}_S(G)$.
\\
(c) $Aut(G) \triangleleft \mathit{Half}_T(G)$.
\end{prop}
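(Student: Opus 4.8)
The plan is to assemble the pieces already laid out in the discussion preceding the statement, since that discussion has supplied every ingredient I need: the composition of half-isomorphisms is a half-isomorphism, the composition of special half-isomorphisms is special (via $(gf)^{-1}=f^{-1}g^{-1}$), $I_d$ is the identity, and the composition rules for automorphisms and anti-automorphisms. All three sets in play are subsets of the symmetric group $\mathrm{Sym}(G)$ of all bijections of $G$, so for each of them it suffices to check that it contains $I_d$ and is closed under both composition and inversion. Because $G$ may be infinite, I would be careful to verify closure under inversion explicitly rather than relying on finiteness.

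For part (a), $\mathit{Half}_S(G)$ contains $I_d$ (which is trivially special), is closed under composition (this is exactly the remark that $f^{-1}g^{-1}$ is special), and is closed under inversion: if $f$ is special then $f^{-1}$ is a half-isomorphism by definition, and $(f^{-1})^{-1}=f$ is a half-isomorphism as well, so $f^{-1}$ is itself special. Hence $\mathit{Half}_S(G)$ is a group. For $\mathit{Half}_T(G)$ I would first recall the inclusion $\mathit{Half}_T(G)\subset\mathit{Half}_S(G)$ noted above, then check the subgroup axioms using the stated composition rules: $I_d\in Aut(G)\subset\mathit{Half}_T(G)$; a composite of automorphisms or anti-automorphisms is again an automorphism or an anti-automorphism; and the inverse of an automorphism (respectively, anti-automorphism) is of the same type.

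For part (b), $\mathit{Half}(G)$ is already a monoid, being closed under composition with identity $I_d$, so it is a group precisely when it is closed under inversion. But $f^{-1}\in\mathit{Half}(G)$ says exactly that $f$ is special, so closure under inversion is equivalent to every half-automorphism being special, that is, to $\mathit{Half}(G)=\mathit{Half}_S(G)$. For part (c), I would show $Aut(G)$ is a subgroup of $\mathit{Half}_T(G)$ (immediate from the composition rules) and then establish normality. The cleanest route is to observe that $Aut(G)$ has index at most two in $\mathit{Half}_T(G)$: any two anti-automorphisms $g,h$ satisfy $g^{-1}h\in Aut(G)$, so the anti-automorphisms, if any exist, form a single coset of $Aut(G)$, and a subgroup of index at most two is automatically normal. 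Alternatively, one can verify conjugation directly, using the noted fact that $g^{-1}fg\in Aut(G)$ when $f$ is an automorphism and $g$ is an anti-automorphism.

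The main obstacle here is essentially nonexistent: the content is bookkeeping, and the only points demanding care are tracking the order of composition, confirming that the inverse of a special half-automorphism is again special, and recognizing that all anti-automorphisms lie in one coset. I would present the index-at-most-two argument for part (c) as the tidiest way to sidestep a case analysis on the conjugating element.
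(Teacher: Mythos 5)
Your proposal is correct and matches the paper's argument, which likewise just assembles the preceding observations (closure of half-isomorphisms under composition, $(gf)^{-1}=f^{-1}g^{-1}$ for special ones, and the composition rules for automorphisms and anti-automorphisms); your explicit check that the inverse of a special half-automorphism is again special is a worthwhile detail the paper leaves implicit. For part (c) your index-at-most-two argument is a cosmetic variant of the paper's direct verification via $g^{-1}fg\in Aut(G)$, which you also mention, so the two routes are essentially the same.
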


\begin{obs}
It is shown in section~\ref{sl} that in general $\mathit{Half}(G)$ is not a group. 
\end{obs}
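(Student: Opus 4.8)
The plan is to deduce the Remark from Proposition~\ref{prop22}(b). Recall that $\mathit{Half}(G)$ is always a monoid under composition with identity $I_d$, so it is a group precisely when it is closed under inverses, and by Proposition~\ref{prop22}(b) this happens if and only if $\mathit{Half}(G)=\mathit{Half}_S(G)$. Hence, to show that $\mathit{Half}(G)$ need not be a group, it suffices to exhibit a single groupoid $G$ carrying a half-automorphism $f$ that is not special, i.e.\ with $f\in\mathit{Half}(G)\setminus\mathit{Half}_S(G)$; for such $G$ the inverse $f^{-1}$ lies outside $\mathit{Half}(G)$, so the set fails to be closed under inverses. Because every half-automorphism of a finite groupoid is special (the finite-loop statement \cite[Corollary~2.7]{GA192}, generalized to finite groupoids in Section~\ref{sec3}), any witness $G$ must be infinite, which is why the construction is carried out among loops of infinite order in Section~\ref{sl}.

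First I would record a workable description of specialness. Writing the condition for $f^{-1}$ to be a half-isomorphism with $a=f(u)$ and $b=f(v)$ and applying the injective map $f$, one finds that $f$ is special if and only if $f(u)f(v)\in\{f(uv),f(vu)\}$ for all $u,v\in G$. Comparing this with the half-isomorphism inclusion $f(uv),f(vu)\in\{f(u)f(v),f(v)f(u)\}$, a short case analysis shows that $f$ fails to be special exactly when there is a pair $u,v$ with $f(u)f(v)\neq f(v)f(u)$ and $f(uv)=f(vu)=f(v)f(u)$, which by injectivity forces $uv=vu$. In other words, a non-special half-automorphism is one that sends some commuting pair $(u,v)$ to a non-commuting pair while assigning to their product the \emph{reversed} value $f(v)f(u)$. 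The target of the construction is therefore a loop together with a permutation $f$ realizing such a flip at a single location while satisfying the half-isomorphism identity everywhere.

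To build the example I would take a countably infinite carrier with an operation assembled from commuting and non-commuting ``blocks'', together with a shift-type permutation $f$ that acts as an isomorphism (or anti-isomorphism) on all but one site and performs the required flip there. The essential use of infiniteness is that the discrepancy created by the flip can be pushed arbitrarily far along the carrier instead of being forced to close up: on a finite set the bijectivity of $f$ would eventually make the reversed choices propagate back and restore specialness --- this closure is precisely the content of the finite result --- whereas an infinite carrier lets the single defect escape to infinity. As in Example~\ref{ex1}, the flip produces a product $f(u)f(v)$ whose $f$-preimage is neither $uv$ nor $vu$, witnessing $f^{-1}\notin\mathit{Half}(G)$. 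The main obstacle is verificational rather than conceptual: one must check that the prescribed Cayley table actually defines a loop (every left and right translation a bijection, together with an identity element) and that the inclusion $f(xy)\in\{f(x)f(y),f(y)f(x)\}$ holds for \emph{every} one of the infinitely many pairs $(x,y)$, not merely away from the flip site. Once both are confirmed, $f\in\mathit{Half}(G)\setminus\mathit{Half}_S(G)$, and the Remark follows.
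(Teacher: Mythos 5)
Your reduction is fine as far as it goes: by Proposition~\ref{prop22}(b) it suffices to exhibit one groupoid with a non-special half-automorphism, your pairwise criterion for non-specialness is just Theorem~\ref{teo31}(c) in contrapositive form, and the finiteness results of Section~\ref{sec3} correctly force any witness to be infinite. But the Remark is an existence statement, and your proposal stops exactly where its content begins: you describe a template (``shift-type permutation, isomorphism on all but one site, a flip at one site'') and explicitly defer both the construction of the operation and the verification of the half-automorphism inclusion. The paper, by contrast, writes a witness down: $G_\infty=\prod_{i=1}^\infty G$ with $*$ on odd and $\cdot$ on even coordinates, built from the two operations of Example~\ref{ex1}, together with an explicit shift $\phi$ that inserts $f$ between coordinate $1$ and coordinate $2$. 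Without some such concrete object and its verification, what you have is a plan, not a proof.

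Moreover, the deferred verification is precisely where the difficulty sits, and your template, taken literally, does not pass it. If $\phi$ acts coordinatewise as an isomorphism everywhere except one site, where it acts as a non-special half-isomorphism $f$, then $f$ necessarily has a strict-flip pair $(a,b)$ with $f(a*b)=f(b)\cdot f(a)\neq f(a)\cdot f(b)$. Choosing $u,v$ with entries $a,b$ at the flip site and a noncommuting pair at some other site, $\phi(u\bullet v)$ differs from $\phi(u)\bullet\phi(v)$ at the flip site and from $\phi(v)\bullet\phi(u)$ at the noncommuting site; since the half-isomorphism condition demands equality of whole tuples for a single choice of order, $\phi$ fails to be a half-automorphism at this pair. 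Concretely, in the Section~\ref{sl} groupoid built from Example~\ref{ex1}, taking $u=(3,3,1,1,\dots)$ and $v=(5,2,1,1,\dots)$ gives $\phi(u\bullet v)=(1,1,1,\dots)$, whereas $\phi(u)\bullet\phi(v)=(1,4,1,1,\dots)$ and $\phi(v)\bullet\phi(u)=(1,1,1,4,1,\dots)$. So a genuinely additional idea is needed: one must arrange that whenever the flip site forces the reversed order, all remaining coordinates of the pair commute --- and it is worth noting that the paper's own one-line justification (``in each entry it behaves like an isomorphism or a half-isomorphism'') elides exactly this point, so your sketch inherits, rather than resolves, the crux of the argument.
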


\section{Special half-isomorphisms on groupoids}
\label{sec3}

Considering $(G,*)$ and $(G',\cdot)$ as groupoids, define the following set:

\begin{equation*}
K(G) = \{(x,y)\in G\times G \mid xy = yx\}
\end{equation*}

The next two results are respectively extensions of Proposition 2.3 and Theorem 2.5 of \cite{GA192} to groupoids. We note that the proofs are similar to the ones for corresponding results given in \cite{GA192}.

\begin{lema}
\label{lema32}
Let $f:G \rightarrow G'$ be a half-isomorphism. Then 

\begin{equation*}
\begin{aligned}
\psi_{(G,G')}:{} & K(G') &\rightarrow {}&K(G)\\
&(x,y)&\mapsto  {}&(f^{-1}(x),f^{-1}(y))
\end{aligned}
\end{equation*}

\noindent{}is injective.
\end{lema}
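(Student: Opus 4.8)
The plan is to verify two things: first that $\psi_{(G,G')}$ is well-defined, meaning it genuinely maps into $K(G)$, and second that it is injective. I expect the well-definedness to carry essentially all of the content, while injectivity will be immediate.

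For well-definedness, I would take a pair $(x,y) \in K(G')$ and write $a = f^{-1}(x)$, $b = f^{-1}(y)$, so that $f(a) = x$, $f(b) = y$, and the hypothesis $(x,y)\in K(G')$ reads $f(a)\cdot f(b) = f(b)\cdot f(a)$. Applying the half-isomorphism property to the two products $a*b$ and $b*a$ gives $f(a*b) \in \{f(a)\cdot f(b), f(b)\cdot f(a)\}$ and $f(b*a) \in \{f(b)\cdot f(a), f(a)\cdot f(b)\}$. Because the two elements $f(a)\cdot f(b)$ and $f(b)\cdot f(a)$ coincide, each of these sets collapses to the single value $f(a)\cdot f(b)$, so $f(a*b) = f(a)\cdot f(b) = f(b*a)$. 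Since $f$ is a bijection, it is in particular injective, whence $a*b = b*a$; that is, $(a,b) = (f^{-1}(x), f^{-1}(y)) \in K(G)$, as required.

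For injectivity, I would simply use that $f$ is a bijection, so $f^{-1}$ is injective on each coordinate: if $\psi_{(G,G')}(x_1,y_1) = \psi_{(G,G')}(x_2,y_2)$, then $f^{-1}(x_1) = f^{-1}(x_2)$ and $f^{-1}(y_1) = f^{-1}(y_2)$, and applying $f$ gives $x_1 = x_2$ and $y_1 = y_2$, so the two pairs agree.

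The only genuine obstacle is the well-definedness step, and the key idea there is that commutativity of the pair in $G'$ forces the two potential images $f(a)\cdot f(b)$ and $f(b)\cdot f(a)$ to coincide; this removes the ambiguity inherent in the definition of a half-isomorphism and lets injectivity of $f$ transport the commutativity back to $G$. No special structure (quasigroup, loop, or associativity) is needed, so the argument runs for arbitrary groupoids, which is precisely what is required to extend the corresponding loop result of \cite{GA192}.
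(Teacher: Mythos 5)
Your proof is correct and follows essentially the same route as the paper: both establish well-definedness by noting that commutativity of $(x,y)$ in $G'$ collapses the set $\{f(a)\cdot f(b), f(b)\cdot f(a)\}$ to a single element, forcing $f(a*b)=f(b*a)$ and hence $a*b=b*a$ by injectivity of $f$, and both derive injectivity of $\psi_{(G,G')}$ directly from bijectivity of $f$. No discrepancies to report.
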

\begin{proof}

For $(x,y)\in K(G')$, we have
\begin{equation*}
\{f(f^{-1}(x)*f^{-1}(y)),f(f^{-1}(y)*f^{-1}(x))\} \subseteq \{x\cdot y, y\cdot x\} = \{x\cdot y\}.
\end{equation*}
Then, $f(f^{-1}(x)*f^{-1}(y))=f(f^{-1}(y)*f^{-1}(x))$, and so $f^{-1}(x)*f^{-1}(y) = f^{-1}(y)*f^{-1}(x)$. Thus, $(f^{-1}(x),f^{-1}(y))\in K(Q)$ and the mapping $\psi_{(G,G')}$ is well-defined. 

Now, let $(x,y),(x',y')\in K(G')$ such that $\psi_{(G,G')}((x,y)) = \psi_{(G,G')}((x',y'))$. Then, $f^{-1}(x) = f^{-1}(x')$ and $f^{-1}(y) = f^{-1}(y')$. Since $f$ is a bijection, the mapping $\psi_{(G,G')}$ is injective.
\end{proof}

\begin{teo}
\label{teo31}
Let $f:G \rightarrow G'$ be a half-isomorphism. Then, the following statements are equivalent:

\noindent{}(a) $f$ is special.\\
(b) $\{f(x*y),f(y*x)\} = \{f(x)\cdot f(y), f(y)\cdot f(x)\}$ for any $x,y\in G$.\\
(c) For all $x,y\in G$ such that $x*y = y*x$, we have $f(x)\cdot f(y) = f(y)\cdot f(x)$.\\
(d) $\psi_{(G,G')}$ is a bijection.

\end{teo}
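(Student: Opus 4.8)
The plan is to establish the cycle of implications $(a) \Rightarrow (b) \Rightarrow (c) \Rightarrow (d) \Rightarrow (a)$. The guiding observation, used throughout, is that since $f$ is a half-isomorphism both $f(x*y)$ and $f(y*x)$ already lie in $\{f(x)\cdot f(y), f(y)\cdot f(x)\}$ (apply the defining property to the pairs $(x,y)$ and $(y,x)$), so the inclusion $\{f(x*y), f(y*x)\} \subseteq \{f(x)\cdot f(y), f(y)\cdot f(x)\}$ is automatic. The whole content of specialness is the reverse inclusion, and each of the equivalent conditions repackages this single fact.

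For $(a) \Rightarrow (b)$, assuming $f^{-1}$ is a half-isomorphism, I would apply its defining property to the pair $(f(x), f(y))$ to get $f^{-1}(f(x)\cdot f(y)) \in \{x*y, y*x\}$, i.e. $f(x)\cdot f(y) \in \{f(x*y), f(y*x)\}$, and symmetrically for $f(y)\cdot f(x)$ via the pair $(f(y), f(x))$. This yields the reverse inclusion and hence the set equality in $(b)$. For $(b) \Rightarrow (c)$, if $x*y = y*x$ then the left-hand set in $(b)$ collapses to the singleton $\{f(x*y)\}$, forcing the right-hand set to be a singleton as well, which is exactly $f(x)\cdot f(y) = f(y)\cdot f(x)$.

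The key conceptual point is a reinterpretation of the surjectivity of $\psi_{(G,G')}$, which drives both $(c) \Rightarrow (d)$ and $(d) \Rightarrow (a)$. Because $f$ is a bijection, a pair $(a,b)\in K(G)$ lies in the image of $\psi_{(G,G')}$ if and only if its only preimage candidate $(f(a), f(b))$ belongs to $K(G')$, i.e. if and only if $f(a)\cdot f(b) = f(b)\cdot f(a)$. Thus surjectivity of $\psi_{(G,G')}$ is literally the assertion that $a*b = b*a$ implies $f(a)\cdot f(b) = f(b)\cdot f(a)$, which is $(c)$; combined with the injectivity already proved in Lemma~\ref{lema32}, this gives $(c) \Leftrightarrow (d)$, and in particular $(c) \Rightarrow (d)$.

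For $(d) \Rightarrow (a)$ I would use this same reading to recover $(c)$, then verify that $f^{-1}$ is a half-isomorphism by a short case split on whether $a*b = b*a$, where $a = f^{-1}(u)$ and $b = f^{-1}(v)$. If $a$ and $b$ commute, $(c)$ makes $\{f(a)\cdot f(b), f(b)\cdot f(a)\}$ a singleton equal to $f(a*b)$; if they do not, then $f(a*b) \neq f(b*a)$ forces $\{f(a*b), f(b*a)\} = \{f(a)\cdot f(b), f(b)\cdot f(a)\}$ by the automatic inclusion together with a cardinality count. In either case $f(a)\cdot f(b) \in \{f(a*b), f(b*a)\}$, which is exactly the half-isomorphism condition for $f^{-1}$. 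I expect the only mildly delicate point to be this final case analysis, specifically making the cardinality argument clean, since the remaining implications are direct manipulations of the two-element sets.
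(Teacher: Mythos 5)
Your proposal is correct and follows essentially the same route as the paper: the same cycle $(a)\Rightarrow(b)\Rightarrow(c)\Rightarrow(d)\Rightarrow(a)$, with injectivity of $\psi_{(G,G')}$ imported from Lemma~\ref{lema32} and surjectivity identified with condition $(c)$. The only cosmetic difference is in $(d)\Rightarrow(a)$, where you split on whether the preimages commute in $G$ (routing through $(c)$) while the paper splits on whether $(x,y)\in K(G')$ and transfers via the bijectivity of $\psi_{(G,G')}$; the cardinality count in the noncommuting case is the same in both.
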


\begin{proof}(a) $\Rightarrow$ (b) Let $x,y \in G$. Since $f$ is a half-isomorphism, we have \mbox{$\{f(x*y),f(y*x)\} \subseteq$} \mbox{$ \{f(x)\cdot f(y), f(y)\cdot f(x)\}$.} Since $f^{-1}$ is a half-isomorphism, we have $\{f^{-1}(f(x)\cdot f(y)), f^{-1}(f(y)\cdot f(x))\} \subseteq \{x*y,y*x\}$, and hence $\{f(x)\cdot f(y), f(y)\cdot f(x)\} \subseteq \{f(x*y),f(y*x)\}$.\\

\noindent{}(b) $\Rightarrow$ (c) Let $x,y \in G$ such that $x*y = y*x$. Then, $f(x*y) = f(y*x)$. Using the hypothesis, we get $\{f(x)\cdot f(y), f(y)\cdot f(x)\} = \{f(x*y),f(y*x)\} = \{f(x*y)\}$, and therefore $f(x)\cdot f(y) = f(y)\cdot f(x)$.\\

\noindent{}(c) $\Rightarrow$ (d) From Lemma~\ref{lema32}, we know that $\psi_{(G,G')}$ is  injective. Let $(x,y) \in K(G)$. By hypothesis, we have $f(x)\cdot f(y) = f(y)\cdot f(x)$, and then $(f(x),f(y)) \in K(G')$. It is clear that $\psi_{(G,G')}((f(x),f(y))) = (x,y)$, and hence $\psi_{(G,G')}$ is a bijection.\\

\noindent{}(d) $\Rightarrow$ (a) Let $x,y\in G'$. If $(x,y)\in K(G')$, then $(f^{-1}(x),f^{-1}(y)) \in K(G)$ since $\psi_{(G,G')}$ is a bijection. Thus, $f(f^{-1}(x)* f^{-1}(y)) = x\cdot y$, and therefore $f^{-1}(x\cdot y) = f^{-1}(x)* f^{-1}(y)$. If $(x,y)\not \in K(G')$, then  $(f^{-1}(x),f^{-1}(y)) \not \in K(G)$ since $\psi_{(G,G')}$ is a bijection. Consequently, we have 

\begin{center}
$\{f(f^{-1}(x)* f^{-1}(y)), f(f^{-1}(y)* f^{-1}(x))\} = \{x\cdot y,y\cdot x\}$,
\end{center}

\noindent{}and hence $f^{-1}(x\cdot y) \in \{f^{-1}(x)* f^{-1}(y), f^{-1}(y)* f^{-1}(x)\}$.
\end{proof}

As direct consequences of Lemma~\ref{lema32} and Theorem~\ref{teo31}, we have the following corollaries.

\begin{cor}
\label{cor31}
Let $f:G \rightarrow G'$ be a half-isomorphism. If $|K(G)| = |K(G')| < \infty$, then $f$ is special.
\end{cor}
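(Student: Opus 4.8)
The plan is to exploit the injectivity of $\psi_{(G,G')}$ established in Lemma~\ref{lema32} together with the characterization of specialness in Theorem~\ref{teo31}(d). Recall that $\psi_{(G,G')}$ maps $K(G')$ into $K(G)$ and is always injective for any half-isomorphism $f$. By Theorem~\ref{teo31}, the statement ``$f$ is special'' is equivalent to ``$\psi_{(G,G')}$ is a bijection.'' So the entire corollary reduces to upgrading injectivity to surjectivity, and the hypothesis $|K(G)| = |K(G')| < \infty$ is precisely what makes this upgrade automatic.

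The key step is the elementary set-theoretic fact that an injective map between two finite sets of the same cardinality is necessarily surjective, hence bijective. First I would observe that $\psi_{(G,G')}\colon K(G') \to K(G)$ is injective by Lemma~\ref{lema32}. Next, since $|K(G')| = |K(G)| < \infty$ by hypothesis, the image $\psi_{(G,G')}(K(G'))$ is a subset of $K(G)$ with $|\psi_{(G,G')}(K(G'))| = |K(G')| = |K(G)|$; being a full-cardinality subset of a finite set, it must equal $K(G)$. Therefore $\psi_{(G,G')}$ is surjective, and combined with injectivity it is a bijection. Finally, I would invoke the equivalence (d) $\Rightarrow$ (a) of Theorem~\ref{teo31} to conclude that $f$ is special.

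There is essentially no obstacle here; the only subtlety worth flagging is the finiteness hypothesis. The pigeonhole argument ``injective $+$ equal cardinality $\Rightarrow$ bijective'' fails for infinite sets, so the condition $|K(G)| = |K(G')| < \infty$ is genuinely needed and cannot simply be replaced by equality of (possibly infinite) cardinalities without additional care. Indeed, the examples of non-special half-automorphisms promised for loops of infinite order in section~\ref{sl} presumably illustrate exactly this failure. Thus the proof is a two-line deduction, and the real content lies entirely in the preceding Lemma and Theorem; the corollary merely packages the finite case cleanly.
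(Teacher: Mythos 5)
Your proof is correct and matches the paper's intended argument exactly: the paper states this corollary as a ``direct consequence'' of Lemma~\ref{lema32} and Theorem~\ref{teo31}, and the pigeonhole step you spell out (injective map between finite sets of equal cardinality is a bijection, hence Theorem~\ref{teo31}(d) applies) is precisely that consequence. Your remark about the necessity of the finiteness hypothesis is also consistent with the infinite-order counterexamples in Section~\ref{sl}.
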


\begin{cor}
\label{cor32}
Let $G$ be a groupoid such that $|K(G)| < \infty$. Then, $\mathit{Half}(G)$ is a group.
\end{cor}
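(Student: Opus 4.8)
The plan is to derive Corollary~\ref{cor32} as an immediate specialization of Corollary~\ref{cor31} applied to half-automorphisms of $G$. The statement to prove is that if $|K(G)| < \infty$, then $\mathit{Half}(G)$ is a group. By Proposition~\ref{prop22}(b), $\mathit{Half}(G)$ is a group precisely when $\mathit{Half}(G) = \mathit{Half}_S(G)$; since we always have $\mathit{Half}_S(G) \subseteq \mathit{Half}(G)$, it suffices to show the reverse inclusion, i.e.\ that every half-automorphism of $G$ is special.

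First I would take an arbitrary $f \in \mathit{Half}(G)$, so that $f:G \rightarrow G$ is a half-isomorphism. The key observation is that Corollary~\ref{cor31} gives a clean sufficient condition for specialness, namely $|K(G)| = |K(G')| < \infty$, and here the source and target groupoids coincide ($G' = G$). Therefore the condition $|K(G)| = |K(G')|$ is automatically satisfied as an equality of the same cardinality, and the finiteness hypothesis $|K(G)| < \infty$ is exactly what we have assumed. Applying Corollary~\ref{cor31} with $G' = G$ then yields that $f$ is special, i.e.\ $f \in \mathit{Half}_S(G)$.

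Since $f$ was arbitrary, this shows $\mathit{Half}(G) \subseteq \mathit{Half}_S(G)$, and combined with the standing inclusion $\mathit{Half}_S(G) \subseteq \mathit{Half}(G)$ we obtain $\mathit{Half}(G) = \mathit{Half}_S(G)$. Invoking Proposition~\ref{prop22}(b) (or equivalently Proposition~\ref{prop22}(a), which already asserts $\mathit{Half}_S(G)$ is a group) then gives that $\mathit{Half}(G)$ is a group, completing the argument.

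I do not anticipate any genuine obstacle here, since the entire content has already been established in Corollary~\ref{cor31} and Proposition~\ref{prop22}; the only point requiring a moment's care is recognizing that the hypothesis $|K(G)| = |K(G')|$ of Corollary~\ref{cor31} becomes vacuous (a trivial equality) in the automorphism setting $G' = G$, so that finiteness of $|K(G)|$ alone suffices to force every half-automorphism to be special.
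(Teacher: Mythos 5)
Your argument is correct and matches the paper's intent exactly: the paper states Corollary~\ref{cor32} as a direct consequence of Lemma~\ref{lema32} and Theorem~\ref{teo31}, and the route it has in mind is precisely yours, namely that for a half-automorphism the source and target groupoids coincide, so $|K(G)|=|K(G')|<\infty$ holds trivially, Corollary~\ref{cor31} forces every $f\in\mathit{Half}(G)$ to be special, and Proposition~\ref{prop22}(b) concludes. Nothing is missing.
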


\begin{cor}
\label{cor33}
Let $G$ be a finite groupoid. Then, $\mathit{Half}(G)$ is a group.
\end{cor}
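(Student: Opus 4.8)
The plan is to derive this statement immediately from Corollary~\ref{cor32}. That corollary already gives the conclusion under the hypothesis $|K(G)| < \infty$, so the only thing I need to verify is that finiteness of $G$ forces $K(G)$ to be finite. This is clear: by definition $K(G) \subseteq G \times G$, and if $G$ is finite then $|G \times G| = |G|^2$ is finite, so $|K(G)| \le |G|^2 < \infty$. With this observation the hypothesis of Corollary~\ref{cor32} is met, and that corollary applies verbatim to yield that $\mathit{Half}(G)$ is a group.

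If one prefers a route that does not merely cite Corollary~\ref{cor32} but retraces the underlying reasoning, I would argue as follows. Take any half-automorphism $f \in \mathit{Half}(G)$; here the domain and codomain coincide, so trivially $|K(G)| = |K(G)| < \infty$ by the finiteness just noted. Corollary~\ref{cor31} then shows that $f$ is special, i.e. $f \in \mathit{Half}_S(G)$. Since $f$ was arbitrary, this gives $\mathit{Half}(G) = \mathit{Half}_S(G)$, and by Proposition~\ref{prop22}(b) (together with part (a), which guarantees $\mathit{Half}_S(G)$ is always a group) we conclude that $\mathit{Half}(G)$ is a group.

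There is no genuine obstacle here; the result is a direct specialization of Corollary~\ref{cor32}, and the entire content of the argument is the trivial remark that a finite groupoid has finitely many commuting pairs. The statement is really recorded as a corollary for emphasis and for the frequently used special case of finite structures, rather than because it requires additional work beyond what has already been established.
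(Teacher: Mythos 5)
Your proposal is correct and follows exactly the route the paper intends: the paper presents this as a direct consequence of Corollary~\ref{cor32} (itself derived from Lemma~\ref{lema32} and Theorem~\ref{teo31}), with the only needed observation being that $K(G)\subseteq G\times G$ is finite when $G$ is. Your unpacked second argument via Corollary~\ref{cor31} and Proposition~\ref{prop22} is also precisely the chain of reasoning underlying the paper's corollaries.
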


A loop is \emph{diassociative} if any two of its elements generate an associative subloop. Moufang loops and groups are examples of diassociative loops. In \cite[Lemma 2.1]{KSV16}, the authors showed that the item (c) of  Theorem~\ref{teo31} holds for any half-isomorphism on diassociative loops. Therefore, we have the next result.

\begin{cor}
\label{cor34} 
Let $(L,*)$ and $(L',\cdot)$ be diassociative loops. Then, every half-isomorphism between $L$ and $L'$ is special.
\end{cor}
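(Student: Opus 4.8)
The plan is to reduce the statement to condition (c) of Theorem~\ref{teo31}. Since $f$ is already assumed to be a half-isomorphism, the equivalence (a) $\Leftrightarrow$ (c) shows that it suffices to prove that $f$ preserves commutativity, i.e. that $x*y = y*x$ implies $f(x)\cdot f(y) = f(y)\cdot f(x)$. The whole difficulty is therefore concentrated in verifying (c), and this is exactly where diassociativity of both $L$ and $L'$ will enter.

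To verify (c), I would fix $x,y \in L$ with $x*y = y*x$ and pass to the subloop $G = \langle x,y\rangle \le L$. By diassociativity $G$ is a group, and since it is generated by the two commuting elements $x$ and $y$ it is in fact abelian. On the other side, set $H = \langle f(x), f(y)\rangle \le L'$, which is again a group by diassociativity of $L'$. First I would check that $f$ sends the inverses of the generators to inverses: by Proposition~\ref{prop21} we have $f(1)=1$, so from $x*x^{-1}=1$ we get $1 \in \{f(x)\cdot f(x^{-1}), f(x^{-1})\cdot f(x)\}$, which forces $f(x^{-1}) = f(x)^{-1}$ by uniqueness of inverses in $L'$, and similarly for $y$. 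Then an induction on the length of a word in $x^{\pm1}, y^{\pm1}$ representing an element of $G$, using at each step that $f(a*b) \in \{f(a)\cdot f(b), f(b)\cdot f(a)\} \subseteq H$, shows $f(G) \subseteq H$.

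With $f(G)$ sitting inside the group $H$, the idea is to view $f|_G$ as a half-isomorphism from the abelian group $G$ onto its image and to invoke Scott's theorem \cite{Sco57}, which says that every half-isomorphism between groups is an isomorphism or an anti-isomorphism. In either case commutativity is preserved, so from $x*y=y*x$ in $G$ one obtains $f(x)\cdot f(y) = f(y)\cdot f(x)$ in $H \le L'$. This is precisely condition (c), and Theorem~\ref{teo31} then yields that $f$ is special.

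The main obstacle is making the previous paragraph rigorous: to apply Scott's theorem one needs $f|_G$ to be a genuine half-isomorphism onto a \emph{group}, that is, one needs the image $f(G)$ to be a subgroup of $H$ (equivalently, to be closed under $\cdot$). Closure is the delicate point, because the half-isomorphism property only guarantees that \emph{one} of $f(a)\cdot f(b)$ and $f(b)\cdot f(a)$ equals $f(a*b)$, not that $f(a)\cdot f(b)$ itself lies in $f(G)$. Settling this is exactly the content of Lemma~2.1 of \cite{KSV16}, where it is shown that condition (c) holds for every half-isomorphism of diassociative loops. Invoking that lemma to supply (c) and then quoting the equivalence (a) $\Leftrightarrow$ (c) of Theorem~\ref{teo31} is the cleanest way to close the argument.
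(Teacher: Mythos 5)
Your proposal is correct and closes the argument exactly as the paper does: condition (c) of Theorem~\ref{teo31} is supplied by Lemma~2.1 of \cite{KSV16}, and the equivalence (a) $\Leftrightarrow$ (c) finishes the proof. The preliminary sketch via Scott's theorem on $\langle x,y\rangle$ is a reasonable attempt at a self-contained argument, and you correctly identify that the closure of the image is the gap that forces the appeal to the cited lemma.
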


\begin{obs} The Corollary~\ref{cor34} cannot be extended for some important classes of loops. In example~\ref{ex61}, a non-special half-isomorphism between a right Bol loop and a group is introduced. A loop is called \emph{right Bol loop} if it satisfies the identity $((xy)z)y = x((yz)y)$.
\end{obs}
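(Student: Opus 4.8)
The substance of this remark is the counterexample announced as Example~\ref{ex61}: a nonassociative right Bol loop $L$, a group $G$, and a half-isomorphism between them that fails to be special. The conceptual reason such an example should exist is that Corollary~\ref{cor34} rests entirely on diassociativity — it is deduced from \cite[Lemma 2.1]{KSV16}, which forces item (c) of Theorem~\ref{teo31} for \emph{every} half-isomorphism between diassociative loops — whereas a proper right Bol loop need not be diassociative, so that argument has no purchase there. Since groups are diassociative, if $L$ were also diassociative the half-isomorphism would be special by Corollary~\ref{cor34}; hence the $L$ I exhibit must be a non-diassociative right Bol loop, and the plan is to build the example precisely where diassociativity breaks. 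A convenient place is order $8$: nonassociative right Bol loops exist there, and since the smallest nonassociative Moufang loop has order $12$, every such order-$8$ loop is non-Moufang and in particular not diassociative.

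The plan is to mirror the mechanism of Example~\ref{ex1}. After identifying the two underlying sets through $f$, I may assume $f$ is the identity on a common finite set $S$ and that $G$ and $L$ differ only as Cayley tables; the half-isomorphism condition $f(x*y)\in\{f(x)\cdot f(y),f(y)\cdot f(x)\}$ then collapses to the purely combinatorial requirement that, for every pair $x,y$, the product computed in one table equals either $x*y$ or $y*x$ computed in the other. Concretely I would take $G$ to be an abelian group of order $8$ and obtain $L$ by transposing a controlled family of entries in the Cayley table of $G$, exactly as the loop of Example~\ref{ex1} arises from $C_6$ by interchanging two columns inside two rows. The transpositions must be chosen so that (i) each altered product is still recovered as the reversed product $y*x$, which keeps $f$ a half-isomorphism, and (ii) the resulting table satisfies the right Bol identity $((xy)z)y=x((yz)y)$ while failing to be commutative.

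Verification then reduces to three finite checks, organized around Theorem~\ref{teo31}. First, that $f$ is a half-isomorphism: since $f$ is the identity, only the cells where the two tables differ need inspection, and at each one I confirm that the product in $G$ coincides with the transposed product in $L$ (on the agreeing cells the condition is automatic). Second, that $L$ is a right Bol loop: I verify $((xy)z)y=x((yz)y)$ directly on the finite table, most efficiently by machine or by citing the classification of small right Bol loops. Third — the payoff — that $f$ is not special: by Theorem~\ref{teo31} it suffices to produce a single pair $x,y$ commuting in the domain yet with non-commuting images. With $G$ abelian every pair commutes in the domain, so item (c) of Theorem~\ref{teo31} would force $L$ to be commutative; this contradicts the non-commutativity arranged in step (ii), so $f$ is non-special. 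Equivalently, one displays a product whose preimage violates the half-isomorphism condition, exactly as with $f^{-1}(3\cdot 5)$ in Example~\ref{ex1}.

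The main obstacle is step (ii): simultaneously forcing the modified table to satisfy the right Bol identity, to remain non-commutative, and to stay close enough to $G$ that the identity is still a half-isomorphism. Commutativity and the half-isomorphism property can be monitored cell by cell, but the right Bol identity couples triples of entries across the entire table, so an ad hoc transposition of a group table will generically destroy it. I therefore expect the decisive work to be choosing $L$ from among the known nonassociative (hence non-diassociative) right Bol loops of order $8$ and only then searching for a compatible abelian group $G$ and bijection $f$, after which the three checks above are routine finite verifications.
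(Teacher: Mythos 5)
Your logical skeleton is sound — after relabeling you may indeed assume $f=I_d$, and if the source is an abelian group then criterion (c) of Theorem~\ref{teo31} instantly certifies non-specialness once the target is noncommutative — but the proposal never actually produces the object whose existence is the entire content of the remark. Everything is deferred to a search for a noncommutative right Bol loop $L$ of order $8$ whose Cayley table is obtained from that of an abelian group by swapping one cell of each noncommuting pair, and you give no argument that this search can succeed: the half-isomorphism condition $x*y\in\{x\cdot y,\,y\cdot x\}$ for \emph{all} $x,y$, imposed simultaneously with the right Bol identity and with $(G,*)$ being an abelian group, is a strong global constraint, and your own last paragraph concedes that ad hoc transpositions generically destroy the Bol identity. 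It is telling that the paper's actual Example~\ref{ex61}, found by the model builder MACE4 \cite{mace}, has a different shape from the one you posit: the half-isomorphism goes \emph{from} the right Bol loop $L$ \emph{to} a nonabelian group ($D_8$), the bijection is the nontrivial permutation $f=(3\,5\,7)(4\,6\,8)$ rather than the identity, and non-specialness is certified not through commutativity of an abelian source but through the cardinality criterion of Lemma~\ref{lema32}/Theorem~\ref{teo31}: $|K(L)|=56\neq 40=|K(D_8)|$, so $\psi$ cannot be a bijection. Your abelian-source configuration may or may not exist at order $8$; as written, the existence claim is an unproven conjecture, which is a genuine gap since the remark stands or falls with the example.

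A secondary slip: you infer that the nonassociative right Bol loops of order $8$ are ``non-Moufang and in particular not diassociative.'' The implication runs the other way — Moufang loops are diassociative, but diassociative loops need not be Moufang — so ``non-Moufang'' does not by itself yield ``non-diassociative.'' The conclusion is correct for right Bol loops, because a diassociative (indeed merely flexible) right Bol loop is already Moufang, but that fact needs to be invoked; alternatively, non-diassociativity of $L$ follows a posteriori from the contrapositive of Corollary~\ref{cor34} once the non-special half-isomorphism is in hand, so this side argument can simply be dropped. Neither repair, however, fills the main gap above: without exhibiting concrete tables (or citing a completed computation), the statement is not proved.
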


This section is finished with a property of half-isomorphic groupoids.

\begin{prop}
\label{prop33} 
If $G\stackrel{H}{\cong} G'$, then: 

\noindent{}(a) $\mathit{Half}(G) \cong \mathit{Half}(G')$
\\
(b) $\mathit{Half}_S(G) \cong \mathit{Half}_S(G')$

\end{prop}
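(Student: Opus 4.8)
The plan is to prove both parts simultaneously by conjugating with a fixed special half-isomorphism. Since $G\stackrel{H}{\cong} G'$, I would fix a special half-isomorphism $f:G\rightarrow G'$; by definition both $f$ and $f^{-1}$ are half-isomorphisms, and moreover $f^{-1}$ is itself special, because its inverse is $f$, which is a half-isomorphism. I would then set up the candidate maps
\[
\Phi:\mathit{Half}(G)\rightarrow\mathit{Half}(G'),\qquad \Phi(\phi)=f\phi f^{-1},
\]
together with $\Psi:\mathit{Half}(G')\rightarrow\mathit{Half}(G)$ given by $\Psi(\psi)=f^{-1}\psi f$, intending $\Psi$ to be the inverse of $\Phi$.

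The first step is well-definedness. For $\phi\in\mathit{Half}(G)$, the composite $f\phi f^{-1}$ runs $G'\xrightarrow{f^{-1}}G\xrightarrow{\phi}G\xrightarrow{f}G'$, so it is a composition of three half-isomorphisms; by the closure property established in Section~\ref{sec2} (a composition of half-isomorphisms is a half-isomorphism), it is a half-automorphism of $G'$, whence $\Phi$ indeed lands in $\mathit{Half}(G')$. The symmetric argument, using that $f^{-1}$ is a half-isomorphism, shows $\Psi$ carries $\mathit{Half}(G')$ into $\mathit{Half}(G)$.

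Next I would check that $\Phi$ respects composition and is bijective. Since composition of mappings is associative and $f^{-1}f$ and $ff^{-1}$ are the respective identity mappings, the cancellation $\Phi(\phi_1\phi_2)=f\phi_1\phi_2 f^{-1}=(f\phi_1 f^{-1})(f\phi_2 f^{-1})=\Phi(\phi_1)\Phi(\phi_2)$ holds, and $\Phi(I_d)=I_d$; the same cancellation gives $\Psi\circ\Phi$ equal to the identity on $\mathit{Half}(G)$ and $\Phi\circ\Psi$ equal to the identity on $\mathit{Half}(G')$. Thus $\Phi$ is a bijection that preserves the composition operation and the identity, i.e. an isomorphism of $\mathit{Half}(G)$ onto $\mathit{Half}(G')$ as monoids under composition, which proves (a). For (b) I would simply restrict $\Phi$: here the key point is preservation of speciality, namely that for $\phi\in\mathit{Half}_S(G)$ the three factors $f$, $\phi$, $f^{-1}$ are all special half-isomorphisms, so by the fact recorded in Section~\ref{sec2} that a composition of special half-isomorphisms is again special, $\Phi(\phi)=f\phi f^{-1}$ lies in $\mathit{Half}_S(G')$. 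Symmetrically $\Psi$ sends $\mathit{Half}_S(G')$ into $\mathit{Half}_S(G)$, so the restrictions of $\Phi$ and $\Psi$ are mutually inverse bijections, and the homomorphism computation above applies verbatim; since $\mathit{Half}_S(G)$ and $\mathit{Half}_S(G')$ are groups by Proposition~\ref{prop22}, this restriction is a group isomorphism, proving (b).

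The only genuine content, and hence the step I expect to be the main obstacle, is well-definedness: verifying that conjugation by $f$ sends half-automorphisms to half-automorphisms and (for (b)) special ones to special ones. This rests entirely on having \emph{both} $f$ and $f^{-1}$ available as half-isomorphisms — which is precisely what $G\stackrel{H}{\cong} G'$ supplies — together with the closure of half-isomorphisms and of special half-isomorphisms under composition. Beyond carefully tracking domains and codomains in the triple composite, I anticipate no real difficulty, since the homomorphism and bijectivity claims are formal consequences of associativity and the cancellation $f^{-1}f=I_d=ff^{-1}$.
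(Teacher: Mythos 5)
Your proposal is correct and follows essentially the same route as the paper: conjugation by a fixed special half-isomorphism, using closure of (special) half-isomorphisms under composition for well-definedness and the formal cancellation $f^{-1}f = I_d = ff^{-1}$ for the bijection and homomorphism properties. You are somewhat more explicit than the paper about why the conjugate lands in $\mathit{Half}(G')$ and about the monoid (rather than group) nature of part (a), but the argument is the same.
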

\begin{proof}
Let $\phi: G \rightarrow G'$ be a special half-isomorphism. Define $\varphi: \mathit{Half}(G) \rightarrow \mathit{Half}(G')$ by $\varphi(f) = \phi f \phi^{-1}$. It is clear that $\varphi$ is a bijection. For $f,g\in \mathit{Half}(G)$, we have $\varphi(fg) = \phi fg \phi^{-1} = \phi f\phi^{-1} \phi g \phi^{-1} = \varphi(f)\varphi(g)$. Thus, $\mathit{Half}(G) \cong \mathit{Half}(G')$. The rest of the claim is concluded from the fact that $\varphi(\mathit{Half}_S(G)) = \mathit{Half}_S(G')$.
\end{proof}

\begin{obs}
\label{ob31} If $G\stackrel{H}{\cong} G'$, then $Aut(G)$ is not isomorphic to $Aut(G')$ in general (see example~\ref{ex41}).
\end{obs}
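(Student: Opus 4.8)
The statement is a ``not in general'' assertion, so the plan is to establish it by exhibiting an explicit pair of half-isomorphic groupoids whose automorphism groups are non-isomorphic; this is precisely what example~\ref{ex41} is meant to supply. The guiding principle is the contrast with Proposition~\ref{prop33}: a special half-isomorphism $\phi$ forces $\mathit{Half}_S(G)\cong\mathit{Half}_S(G')$ via the conjugation $f\mapsto \phi f\phi^{-1}$, but there is no reason this conjugation should carry $Aut(G)$ onto $Aut(G')$, since a half-isomorphism can send an automorphism to an anti-automorphism or to a nontrivial special half-automorphism. Thus $Aut$ is a non-invariant ``slice'' of the invariant group $\mathit{Half}_S$, and the job is to realize two genuinely different slices.

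First I would record a constraint that explains why such an example must be subtle. Applying Theorem~\ref{teo31}(c) to $\phi$ and to $\phi^{-1}$, a special half-isomorphism preserves commutativity in both directions, so $G$ is commutative if and only if $G'$ is. Moreover, for a commutative groupoid every half-automorphism is simultaneously an automorphism and an anti-automorphism (the two candidate values of $f(xy)$ coincide), so $Aut=\mathit{Half}_S$ there; combined with Proposition~\ref{prop33} this shows that for commutative groupoids the automorphism group \emph{is} preserved. Hence any witnessing pair must be noncommutative, which is exactly the regime in which $\mathit{Half}_S$ can strictly exceed $\mathit{Half}_T$ and in which the index $[\mathit{Half}_T:Aut]$ can equal $2$.

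With this in mind, the construction I would pursue takes $G$ to be a finite noncommutative group, so that Scott's theorem \cite{Sco57} gives $\mathit{Half}_S(G)=\mathit{Half}_T(G)$ and $|\mathit{Half}_T(G)|=2\,|Aut(G)|$, the extra factor coming from the inverse anti-automorphism $x\mapsto x^{-1}$, which is not an automorphism since $G$ is noncommutative. I would then search --- among the small loops sharing the principal h-groupoid of $G$ from section~\ref{sec4}, or by a direct computer search over small nonassociative loops --- for a partner $G'$ with $G'\stackrel{H}{\cong}G$ in which either a nontrivial special half-automorphism appears or the automorphism/anti-automorphism split is different, so that $|Aut(G')|\neq|Aut(G)|$ while still $|\mathit{Half}_S(G')|=|\mathit{Half}_S(G)|$. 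Specialness of the exhibited bijection $\phi$ is then checked cheaply through Theorem~\ref{teo31}(c), by verifying that commuting pairs are sent to commuting pairs, after which $Aut(G)$ and $Aut(G')$ are read off directly from the two Cayley tables.

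The main obstacle is this search-and-verify step: one must actually produce a concrete $G'$ together with a special half-isomorphism onto $G$, and then compute the two automorphism groups precisely enough to separate them, by order when possible and by group structure when the orders happen to coincide. Once the pair is in hand, everything else is automatic --- specialness follows from Theorem~\ref{teo31}, and the preserved invariant $\mathit{Half}_S(G)\cong\mathit{Half}_S(G')$ from Proposition~\ref{prop33} both frames the phenomenon and confirms that the discrepancy can live only in how $Aut$ sits inside $\mathit{Half}_S$.
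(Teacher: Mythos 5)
Your framing is sound and your preliminary observations are correct: by Theorem~\ref{teo31}(c) applied to $\phi$ and $\phi^{-1}$ a special half-isomorphism does preserve commutativity in both directions, for commutative groupoids every half-automorphism is an automorphism so $Aut$ \emph{is} preserved there, and the phenomenon to be exhibited is indeed that $Aut$ is a non-invariant slice of the invariant group $\mathit{Half}_S$. You have also correctly identified where such an example should be sought (among principal h-groupoids of a fixed noncommutative $G$, with specialness of the identity map coming for free from equation~\eqref{eq41}) and how to certify it (read off $Aut$ from the Cayley tables).

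The genuine gap is that you never produce the witness. The statement is an existence claim --- its entire proof \emph{is} a concrete pair $(G,G')$ with $G\stackrel{H}{\cong}G'$ and $Aut(G)\not\cong Aut(G')$ --- and your proposal ends at ``I would then search,'' which you yourself flag as the main obstacle. Nothing in your outline guarantees the search succeeds: for instance, Proposition~\ref{prop52} shows that for a finite noncommutative group $G$ the only principal h-groupoids \emph{isomorphic to} $G$ are $G$ and $G^T$, so your candidate $G'$ must be a non-isomorphic principal h-quasigroup, and whether one exists with a different automorphism group is exactly the nontrivial part. The paper settles this with Example~\ref{ex41}: two explicit order-$8$ quasigroups $(Q,*)$ and $(Q,\cdot)$ with $(Q,\cdot)\in\mathcal{M}((Q,*))$ (so the identity map is a special half-isomorphism) and $|Aut((Q,*))|=4$ while $|Aut((Q,\cdot))|=8$, verified by a GAP/LOOPS computation. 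Note also that the paper's witnesses are quasigroups without an identity element, not groups, so your plan of anchoring the construction at a finite noncommutative group and invoking Scott's theorem, while plausible, is not what the paper does and remains unverified as stated. Until a concrete pair is exhibited and its two automorphism groups are actually computed and distinguished, the remark is not proved.
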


\section{Principal h-groupoids of G}
\label{sec4}

In this section, $G_0 = (G,*)$ is considered as a noncommutative groupoid.

Let $(G',\bullet)$ be a groupoid such that $G_0\stackrel{H}{\cong} (G',\bullet)$. Then, there exists a special half-isomorphism $f$ of $G_0$ into $(G',\bullet)$. Define an operation $\cdot$ on $G$ by $x\cdot y = f^{-1}(f(x)\bullet f(y))$. Thus, $f$ is an isomorphism of $(G,\cdot)$ into $(G',\bullet)$, and hence $I_d: G_0 \rightarrow (G,\cdot)$ is a special half-isomorphism, where $I_d$ is the identity mapping of $G$.

A groupoid $(G,\cdot)$ for which $I_d: G_0 \rightarrow (G,\cdot)$ is a special half-isomorphism is called a \emph{principal h-groupoid of $G_0$}. Therefore, the following result is at hand.

\begin{prop}
\label{prop41}
Let $G'$ be a groupoid. Then, $G_0\stackrel{H}{\cong} G'$ if and only if $G'$ is isomorphic to a principal h-groupoid of $G_0$.
\end{prop}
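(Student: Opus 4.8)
The plan is to prove the two implications separately, relying on two facts established in Section~\ref{sec2}: the composition of two special half-isomorphisms is again a special half-isomorphism, and every isomorphism is a special half-isomorphism (since an isomorphism and its inverse are both isomorphisms, hence both half-isomorphisms).

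For the forward implication, I would assume $G_0 \stackrel{H}{\cong} G'$, so that there is a special half-isomorphism $f: G_0 \to (G',\bullet)$. Following the construction given just before the statement, I would equip $G$ with the operation $x\cdot y = f^{-1}(f(x)\bullet f(y))$. A direct check yields $f(x\cdot y) = f(x)\bullet f(y)$, so $f$ is an isomorphism from $(G,\cdot)$ onto $G'$; in particular $(G,\cdot)$ is isomorphic to $G'$. It then remains to verify that $(G,\cdot)$ is a principal h-groupoid of $G_0$, that is, that $I_d: G_0 \to (G,\cdot)$ is a special half-isomorphism. For this I would factor $I_d = f^{-1}\circ f$, where the first factor is the special half-isomorphism $f: G_0 \to G'$ and the second is the inverse of the isomorphism $f:(G,\cdot)\to G'$, hence itself a special half-isomorphism; the composition fact then gives the claim.

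For the converse, I would assume $G'$ is isomorphic to some principal h-groupoid $(G,\cdot)$ of $G_0$, say via an isomorphism $g:(G,\cdot)\to G'$. By the definition of principal h-groupoid, $I_d: G_0 \to (G,\cdot)$ is a special half-isomorphism, and $g$ is a special half-isomorphism because every isomorphism is one. Hence $g\circ I_d: G_0 \to G'$ is a composition of special half-isomorphisms, so it is a special half-isomorphism, and therefore $G_0 \stackrel{H}{\cong} G'$.

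I do not expect a substantial obstacle: the whole argument reduces to bookkeeping with the composition and isomorphism facts from Section~\ref{sec2}, together with the explicit construction of $(G,\cdot)$ already described before the statement. The only point requiring mild care is recognizing that the single map $f$ plays two distinct roles — as a special half-isomorphism out of $G_0$ and as an honest isomorphism out of $(G,\cdot)$ — which is precisely what allows $I_d$ to be factored through $G'$ and shown to be special.
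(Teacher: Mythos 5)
Your proposal is correct and follows essentially the same route as the paper: the forward direction uses the operation $x\cdot y = f^{-1}(f(x)\bullet f(y))$ so that $f$ becomes an isomorphism of $(G,\cdot)$ onto $G'$ and $I_d = f^{-1}\circ f$ is a composition of special half-isomorphisms, while the converse composes the special half-isomorphism $I_d$ with the given isomorphism. Your explicit remark that $f$ plays two roles is exactly the point the paper leaves implicit in the paragraph preceding the statement.
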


Denote by $\mathcal{M}(G_0)$ the set of the principal h-groupoids of $G_0$. Note that for \mbox{$(G,\cdot),(G,\bullet)\in \mathcal{M}(G_0)$,} we have $(G,\cdot)=(G,\bullet)$ if $x\cdot y = x \bullet y$, for all $x,y\in G$, which is equivalent to $I_d$ being an isomorphism between $(G,\cdot)$ and $(G,\bullet)$.

Let $(G,\cdot)\in \mathcal{M}(G_0)$. Since $I_d: G_0 \rightarrow (G,\cdot)$ is a special half-isomorphism, we have

\begin{equation}
\label{eq41}
\{x*y,y*x\} = \{x\cdot y,y\cdot x\}, \textrm{ for all } x,y \in G.
\end{equation}

If $(x,y)\in K(G_0)$, then $x\cdot y = y\cdot x = x*y$. For each pair $(x,y),(y,x)\in G\times G\setminus K(G_0)$, there are two possible values for $x\cdot y$ and $y\cdot x$ by \eqref{eq41}. Thus, if $G$ is finite, we have $2^{| G\times G\setminus K(G_0)|/2}$ possibilities for a principal h-groupoid of $G_0$. Hence, the following result is at hand.

\begin{prop}
\label{prop42} If $G$ is finite, then $|\mathcal{M}(G_0)| =2^{(|G|^2-|K(G_0)|)/2}$.

\end{prop}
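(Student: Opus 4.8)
The plan is to count the principal h-groupoids of $G_0$ by determining how many distinct operations $\cdot$ on $G$ satisfy equation \eqref{eq41}, namely $\{x*y,y*x\} = \{x\cdot y,y\cdot x\}$ for all $x,y\in G$. Since a principal h-groupoid is exactly a groupoid $(G,\cdot)$ for which $I_d$ is a special half-isomorphism, and Theorem~\ref{teo31}(b) shows this is equivalent to the set-equality condition \eqref{eq41}, the task reduces to a pure counting problem: how many operation tables $\cdot$ on the fixed underlying set $G$ are compatible with the given $*$ in the sense of \eqref{eq41}.

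First I would partition the set $G\times G$ of ordered pairs into the diagonal-like \emph{symmetric} pairs, where $(x,y)\in K(G_0)$ (equivalently $x*y = y*x$), and the remaining pairs, $G\times G\setminus K(G_0)$. For a pair $(x,y)\in K(G_0)$ the constraint \eqref{eq41} forces $\{x\cdot y, y\cdot x\} = \{x*y\}$, a singleton, so necessarily $x\cdot y = y\cdot x = x*y$; such pairs contribute no freedom. Next I would observe that the pairs outside $K(G_0)$ come in genuine two-element \emph{orbits} $\{(x,y),(y,x)\}$ under the swap, since $x*y\neq y*x$ forces $x\neq y$ and hence $(x,y)\neq(y,x)$; moreover $(x,y)\in K(G_0)$ if and only if $(y,x)\in K(G_0)$, so $K(G_0)$ is swap-invariant and these orbits genuinely partition $G\times G\setminus K(G_0)$ into blocks of size exactly $2$.

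On each such orbit $\{(x,y),(y,x)\}$, condition \eqref{eq41} requires the unordered pair $\{x\cdot y, y\cdot x\}$ to equal the fixed two-element set $\{x*y, y*x\}$. There are precisely two ways to do this: either $(x\cdot y, y\cdot x) = (x*y, y*x)$ or $(x\cdot y, y\cdot x) = (y*x, x*y)$. These choices are made independently across the distinct orbits, and two distinct assignments yield distinct operations $\cdot$ because they differ on at least one product; conversely any choice of one of the two options per orbit produces a valid principal h-groupoid. Hence the total count is $2$ raised to the number of orbits, which is $|G\times G\setminus K(G_0)|/2 = (|G|^2 - |K(G_0)|)/2$.

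The main obstacle, and the step deserving the most care, is verifying that the blocks outside $K(G_0)$ really have size exactly $2$ so that the exponent $(|G|^2-|K(G_0)|)/2$ is an integer and the $2^{(\cdot)}$ formula is justified; this rests on the observation that $x*y\neq y*x$ precludes $x=y$, together with the swap-invariance of $K(G_0)$. The remaining work—that the per-orbit choices are independent and that distinct choice-patterns give distinct (and all valid) operations—is routine, as the display preceding Proposition~\ref{prop42} already indicates; I would simply make the orbit-counting explicit to close the argument.
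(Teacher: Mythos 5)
Your proposal is correct and follows essentially the same argument as the paper: pairs in $K(G_0)$ force $x\cdot y = y\cdot x = x*y$, while the remaining pairs split into swap-orbits of size two, each admitting exactly two assignments consistent with \eqref{eq41}, yielding $2^{(|G|^2-|K(G_0)|)/2}$. You merely spell out more carefully than the paper does that the orbits have size exactly two and that every choice pattern yields a valid, distinct principal h-groupoid.
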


Define $\mathcal{M}_I(G_0) = \{G'\in \mathcal{M}(G_0)\,|\, G'\cong G_0\}$ and let $S(G)$ be the set of permutations of $G$. For $G' = (G,\cdot) \in \mathcal{M}_I(G)$, define $Iso(G',G_0) = \{f\in S(G)\,|\, f \textrm{ is an isomorphism of } G' \textrm{ into } G_0\}$. Note that $Iso(G_0,G_0) = Aut(G_0)$. In the next result, we determine a relationship between $\mathit{Half}_S(G_0)$, $Aut(G_0)$ and $\mathcal{M}_I(G_0)$.

\begin{prop}
\label{prop43} We have:

\noindent{}(a) $Iso(G',G_0) \subset \mathit{Half}_S(G_0)$, for every $G'\in \mathcal{M}_I(G_0)$.
\\
(b) For each $G'\in \mathcal{M}_I(G_0)$, $Iso(G',G_0)$ is a right coset of $Aut(G_0)$ in $\mathit{Half}_S(G_0)$, that is, there exists $f\in \mathit{Half}_S(G_0)$ such that $Iso(G',G_0) = Aut(G_0)f$.
\\
(c) For $G_1,G_2 \in \mathcal{M}_I(G_0)$, if $Iso(G_1,G_0) \cap Iso(G_2,G_0) \not = \emptyset$, then $G_1 = G_2$.
\\
(d) $\mathit{Half}_S(G_0) = \bigcup_{G'\in \mathcal{M}_I(G_0)} Iso(G',G_0)$.
\\
(e) $|\mathcal{M}_I(G_0)| = [\mathit{Half}_S(G_0):Aut(G_0)]$, which is the index of $Aut(G_0)$ in $\mathit{Half}_S(G_0)$.

\end{prop}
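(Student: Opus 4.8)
The plan is to prove the five items in sequence, building each on the previous ones, with the group structure of $\mathit{Half}_S(G_0)$ from Proposition~\ref{prop22}(a) as the backbone. The central observation driving everything is that for a principal h-groupoid $G' = (G,\cdot)$, an element $f \in S(G)$ is an isomorphism $G' \to G_0$ precisely when the composite $I_d \circ f$ interacts correctly with the two operations, and since $I_d : G_0 \to (G,\cdot)$ is itself a special half-isomorphism, such an $f$ should turn out to be a special half-automorphism of $G_0$.

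For item (a), I would take $f \in Iso(G',G_0)$, so $f(x\cdot y) = f(x)*f(y)$ for all $x,y$. Using the defining property \eqref{eq41} of the principal h-groupoid, namely $\{x\cdot y, y\cdot x\} = \{x*y, y*x\}$, I would show that $f(x*y) \in \{f(x)*f(y), f(y)*f(x)\}$, so $f$ is a half-automorphism of $G_0$; then Theorem~\ref{teo31} (likely via criterion (c)) gives that $f$ is special. For item (b), I would fix one $f_0 \in Iso(G',G_0)$ (nonempty since $G' \in \mathcal{M}_I(G_0)$) and show $Iso(G',G_0) = Aut(G_0) f_0$: if $g$ is another isomorphism $G' \to G_0$, then $g f_0^{-1}$ is an automorphism of $G_0$, and conversely composing any automorphism with $f_0$ yields an isomorphism $G' \to G_0$. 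This realizes $Iso(G',G_0)$ as a left coset of $Aut(G_0)$ inside the group $\mathit{Half}_S(G_0)$ — I would double-check the handedness, since the statement says ``right coset $Aut(G_0)f$'' and the composition conventions must be matched carefully.

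Item (c) is the disjointness: if $h \in Iso(G_1,G_0) \cap Iso(G_2,G_0)$, then $h$ is simultaneously an isomorphism from $(G,\cdot_1)$ and from $(G,\cdot_2)$ onto $G_0$, forcing $h(x \cdot_1 y) = h(x)*h(y) = h(x \cdot_2 y)$ and hence $x\cdot_1 y = x\cdot_2 y$ by injectivity of $h$, so $G_1 = G_2$. Item (d) requires both inclusions: $\supseteq$ is immediate from (a), while for $\subseteq$ I would take any $f \in \mathit{Half}_S(G_0)$ and manufacture the principal h-groupoid $G'$ for which $f$ is an isomorphism — defining $x \cdot y := f^{-1}(f(x)*f(y))$ exactly as in the construction preceding Proposition~\ref{prop41}, then verifying $G' \in \mathcal{M}(G_0)$ (using that $f$ is special so that $I_d$ is a special half-isomorphism onto this $G'$) and $G' \cong G_0$ via $f$ itself, placing $f \in Iso(G',G_0)$. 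Finally, item (e) is a counting consequence: by (b), (c), and (d), the sets $Iso(G',G_0)$ as $G'$ ranges over $\mathcal{M}_I(G_0)$ form a partition of $\mathit{Half}_S(G_0)$ into cosets of $Aut(G_0)$, and distinct $G'$ give distinct cosets (this needs the converse of (c), that different h-groupoids yield disjoint hence different cosets), so the number of cosets equals $[\mathit{Half}_S(G_0):Aut(G_0)]$.

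The main obstacle I anticipate is the surjectivity direction of (d) together with the well-definedness it demands: I must confirm that the operation $x\cdot y = f^{-1}(f(x)*f(y))$ genuinely produces a \emph{principal} h-groupoid, i.e. that $I_d : G_0 \to (G,\cdot)$ is special and not merely a half-isomorphism, which is where the hypothesis $f \in \mathit{Half}_S(G_0)$ (rather than just $\mathit{Half}(G_0)$) is essential — and then that the assignment $G' \mapsto Iso(G',G_0)$ is a genuine bijection onto the coset space, so that (c) must be upgraded to the statement that $G_1 \neq G_2$ forces $Iso(G_1,G_0) \neq Iso(G_2,G_0)$, which follows from (c) by contraposition but should be stated explicitly for the index count in (e).
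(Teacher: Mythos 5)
Your proposal is correct and follows essentially the same route as the paper's proof: (a) via \eqref{eq41} together with Theorem~\ref{teo31}, (b) via $gf^{-1}\in Aut(G_0)$ and $\alpha f\in Iso(G',G_0)$, (c) via the observation that $I_d$ becomes an isomorphism $G_1\to G_2$, (d) via the construction $x\cdot y=f^{-1}(f(x)*f(y))$ with specialness of $f$ guaranteeing $(G,\cdot)\in\mathcal{M}(G_0)$, and (e) by the resulting coset partition. The handedness worry you raise is purely terminological: the paper also produces the set $Aut(G_0)f$ and calls it a right coset, so nothing needs to change.
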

\begin{proof} (a) For $G' = (G,\cdot)\in \mathcal{M}_I(G_0)$, let $f\in Iso(G',G_0)$. Then $f(x\cdot y) = f(x)*f(y)$, for all $x,y\in G$. By \eqref{eq41}, $\{f(x\cdot y),f(y\cdot x)\} = \{f(x)*f(y),f(y)*f(x)\}$, for all $x,y \in G$. By Theorem~\ref{teo31}, $f\in \mathit{Half}_S(G_0)$.\\

\noindent{}(b) Fix $f\in Iso(G',G_0)$. It is clear that $gf^{-1}\in Aut(G_0)$, for every $g\in Iso(G',G_0)$, and $\alpha f \in Iso(G',G_0)$, for every $\alpha \in Aut(G_0)$. Hence, we have the desired result.\\

\noindent{}(c) Let $f\in Iso(G_1,G_0) \cap Iso(G_2,G_0)$. Note that $I_d= f^{-1}f:G_1 \rightarrow G_2$ is an isomorphism. From the definition of $\mathcal{M}(G_0)$, it follows that $G_1 = G_2$.\\

\noindent{}(d) Let $f \in \mathit{Half}_S(G_0)$. Define the operation $\cdot$ on $G$ by $x\cdot y = f^{-1}(f(x)*f(y))$, for all $x,y\in G$. Note that $f: (G,\cdot)\rightarrow (G,*)$ is an isomorphism. Furthermore, since $f \in \mathit{Half}_S(G_0)$, and $f(x\cdot y) = f(x)*f(y)$ and $f(y\cdot x) = f(y)*f(x)$, for all $x,y \in G$, we have $\{x\cdot y,y\cdot x\} = \{x*y,y*x\}$, for all $x,y \in G$. Thus, $G' = (G,\cdot)\in \mathcal{M}_I(G_0)$, and hence $f\in Iso(G',G_0)$.\\ 

\noindent{}(e) It is a consequence of the previous items.
\end{proof}

As a consequence of the Proposition~\ref{prop33} and the item (e) of Proposition~\ref{prop43}, we have the following result.

\begin{cor}
\label{cor41} Let $G',G''$ be groupoids such that $G'\stackrel{H}{\cong} G''$ and $\mathit{Half}_S(G')$ is finite. Then,

\begin{center}
$|\mathcal{M}_I(G')|.|Aut(G')| = |\mathcal{M}_I(G'')|.|Aut(G'')|$
\end{center}

\end{cor}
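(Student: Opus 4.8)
The plan is to combine the two cited results directly, since Corollary~\ref{cor41} is stated to be a consequence of Proposition~\ref{prop33} and item (e) of Proposition~\ref{prop43}. The key observation is that the hypotheses give me two handles on the same quantity: the index $[\mathit{Half}_S(G'):Aut(G')]$. First I would invoke item (e) of Proposition~\ref{prop43} applied to $G'$, which yields $|\mathcal{M}_I(G')| = [\mathit{Half}_S(G'):Aut(G')]$, and likewise applied to $G''$ yields $|\mathcal{M}_I(G'')| = [\mathit{Half}_S(G''):Aut(G'')]$. Since $\mathit{Half}_S(G')$ is assumed finite, and Proposition~\ref{prop33}(b) will give $\mathit{Half}_S(G')\cong\mathit{Half}_S(G'')$, the group $\mathit{Half}_S(G'')$ is finite as well, so both indices are finite quantities satisfying the Lagrange relation $|\mathit{Half}_S| = [\mathit{Half}_S:Aut]\cdot|Aut|$.

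The heart of the argument is to show the two indices coincide. Here I would use Proposition~\ref{prop33}: the hypothesis $G'\stackrel{H}{\cong} G''$ gives a special half-isomorphism $\phi$, and the map $\varphi(f)=\phi f\phi^{-1}$ is a group isomorphism $\mathit{Half}_S(G')\to\mathit{Half}_S(G'')$. The one point requiring care is that this isomorphism need not carry $Aut(G')$ onto $Aut(G'')$ — indeed Remark~\ref{ob31} explicitly warns that $Aut(G')$ and $Aut(G'')$ may be non-isomorphic. So I cannot simply say ``$\varphi$ restricts to an isomorphism of automorphism groups.'' Instead I would argue purely at the level of cardinalities: from Lagrange's theorem in the finite group $\mathit{Half}_S(G')\cong\mathit{Half}_S(G'')$, I have
\begin{equation*}
|\mathcal{M}_I(G')|\cdot|Aut(G')| = [\mathit{Half}_S(G'):Aut(G')]\cdot|Aut(G')| = |\mathit{Half}_S(G')|,
\end{equation*}
and symmetrically $|\mathcal{M}_I(G'')|\cdot|Aut(G'')| = |\mathit{Half}_S(G'')|$. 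Since $\varphi$ is a bijection, $|\mathit{Half}_S(G')| = |\mathit{Half}_S(G'')|$, and equating the two right-hand sides gives the claimed identity $|\mathcal{M}_I(G')|\cdot|Aut(G')| = |\mathcal{M}_I(G'')|\cdot|Aut(G'')|$.

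The main obstacle I anticipate is not any deep computation but rather a subtlety about which objects are finite and whether the index-times-order identity is even meaningful. Since only $\mathit{Half}_S(G')$ is assumed finite, I must first confirm finiteness transfers to $\mathit{Half}_S(G'')$ via $\varphi$ before invoking Lagrange; without finiteness the product $|\mathcal{M}_I|\cdot|Aut|$ need not equal the group order in any naive sense. The key tension the proof must respect is that the equality holds at the level of the common value $|\mathit{Half}_S|$ \emph{despite} the individual factors $|\mathcal{M}_I|$ and $|Aut|$ possibly differing between $G'$ and $G''$ — the product is an invariant of the half-isomorphism class even though neither factor is. I expect the write-up to be short, essentially assembling Proposition~\ref{prop33}(b), Proposition~\ref{prop43}(e), and Lagrange's theorem, with the finiteness transfer being the one line that genuinely needs stating.
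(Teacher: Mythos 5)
Your proof is correct and follows exactly the route the paper intends: the paper gives no written proof, stating only that the corollary follows from Proposition~\ref{prop33} and Proposition~\ref{prop43}(e), and your assembly — both products equal $|\mathit{Half}_S|$ by Lagrange, and these cardinalities agree by the isomorphism $\mathit{Half}_S(G')\cong\mathit{Half}_S(G'')$ — is precisely that argument. Your explicit handling of the finiteness transfer and the caution about $Aut(G')\not\cong Aut(G'')$ (cf.\ Remark~\ref{ob31}) are correct refinements of what the paper leaves implicit.
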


Define $G_0^T = (G,\cdot)$, where $x\cdot y = y*x$, for all $x,y\in G$, and denote the set of anti-automorphisms of $G_0$ by $Ant(G_0)$. Since $G_0$ is noncommutative, we have $Aut(G_0)\cap Ant(G_0) = \emptyset$.

\begin{prop}
\label{prop44} $G_0$ has an anti-automorphism if and only if $G_0^T\in \mathcal{M}_I(G_0)$. In this case, $|Ant(G)| = |Aut(G)|$.
\end{prop}
\begin{proof} Note that a bijection $f$ of $G$ is an anti-automorphism of $G_0$ if and only if $f$ is an isomorphism of $G_0$ into $G_0^T$. The rest of the claim is concluded from the item (b) of Proposition~\ref{prop43}.
\end{proof}

\begin{exem} 
\label{ex41}
Let $Q = \{1,2,...,8\}$ and consider the following Cayley tables of $(Q,*)$ and $(Q,\cdot)$:

\begin{center}

\begin{minipage}{.4\textwidth}
\centering
\begin{tabular}{c|cccccccc}

$*$&1&2&3&4&5&6&7&8\\
\hline
1&1&2&3&4&6&5&7&8\\

2&2&1&4&3&5&6&8&7\\

3&4&3&1&2&7&8&5&6\\

4&3&4&2&1&8&7&6&5\\

5&5&6&8&7&1&2&4&3\\

6&6&5&7&8&2&1&3&4\\

7&8&7&6&5&3&4&1&2\\

8&7&8&5&6&4&3&2&1\\

\end{tabular}

\end{minipage} 
 \begin{minipage}{.4\textwidth}
\centering
\begin{tabular}{c|cccccccc}
$\cdot $&1&2&3&4&5&6&7&8\\
\hline
1&1&2&4&3&6&5&7&8\\
2&2&1&3&4&5&6&8&7\\
3&3&4&1&2&7&8&5&6\\
4&4&3&2&1&8&7&6&5\\
5&5&6&8&7&1&2&4&3\\
6&6&5&7&8&2&1&3&4\\
7&8&7&6&5&3&4&1&2\\
8&7&8&5&6&4&3&2&1\\
\end{tabular}

\end{minipage}

\end{center}

We have $(Q,*)$ and $(Q,\cdot)$ being quasigroups. Note that, for $x,y\in Q$:

\begin{center}
$x*y = \left\{\begin{array}{l}
y\cdot x,  \textrm{ if } (x,y) \in \{(1,3),(1,4),(2,3),(2,4),(3,1),(3,2),(4,1),(4,2)\},\\
x\cdot y,  \textrm{ otherwise.} 
\end{array}\right.$
\end{center}

Thus, $(Q,\cdot) \in \mathcal{M}((Q,*))$. Using the LOOPS package \cite{NV1} for GAP \cite{gap} we get $|Aut((Q,*))| = 4$ and $|Aut((Q,\cdot))| = 8$. This illustrates Remark~\ref{ob31}.

Note that $|K((Q,*))| = 16$, and hence $|\mathcal{M}((Q,*))| = 2^{24} = 16777216$. Using a GAP computation with the LOOPS package, we get that there are $64$ quasigroups in $\mathcal{M}((Q,*))$ and $|\mathcal{M}_I((Q,*))| = 12$. By Proposition~\ref{prop43}, we have $|\mathit{Half}((Q,*))|= 48$ and $|\mathcal{M}_I((Q,\cdot))| = 6$.

It is observed that the number of quasigroups in $\mathcal{M}((Q,*))$ is much smaller than $|\mathcal{M}((Q,*))|$. In the next section, we will see that the same occurs for any finite noncommutative quasigroup. \qed
\end{exem}

\section{Principal h-quasigroups of Q}
\label{sec5}

Here, $Q_0= (Q,*)$ is considered as a noncommutative quasigroup. A quasigroup $(Q,\cdot)$ is a \emph{principal h-quasigroup of $Q_0$} if $(Q,\cdot)\in \mathcal{M}(Q_0)$. Denote by $\mathcal{N}(Q_0)$ the set of the principal h-quasigroups of $Q_0$. It is clear that $\mathcal{M}_I(Q_0) \subset \mathcal{N}(Q_0) \subset \mathcal{M}(Q_0)$. The next result is concluded from Proposition~\ref{prop41}.

\begin{prop}
\label{prop51} Let $Q'$ be a quasigroup. Then $Q_0\stackrel{H}{\cong} Q'$ if and only if $Q'$ is isomorphic to a principal h-quasigroup of $Q_0$.
\end{prop}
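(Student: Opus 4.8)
The plan is to reduce Proposition~\ref{prop51} to the already-established Proposition~\ref{prop41} by observing that the only genuine content is a closure statement: when $Q'$ is a quasigroup, the principal h-groupoid of $Q_0$ to which it is isomorphic is automatically a quasigroup, and conversely. First I would invoke Proposition~\ref{prop41} directly for the groupoid case: $Q_0 \stackrel{H}{\cong} Q'$ if and only if $Q'$ is isomorphic to a principal h-groupoid of $Q_0$. The statement for quasigroups then follows once I verify that under the hypothesis ``$Q'$ is a quasigroup'' the relevant principal h-groupoid lies in $\mathcal{N}(Q_0)$ rather than merely in $\mathcal{M}(Q_0)$.

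For the forward direction, suppose $Q_0 \stackrel{H}{\cong} Q'$ with $Q'$ a quasigroup. By Proposition~\ref{prop41}, $Q'$ is isomorphic to some $(Q,\cdot)\in\mathcal{M}(Q_0)$. Since the quasigroup property is preserved under isomorphism, $(Q,\cdot)$ is itself a quasigroup, and hence by definition $(Q,\cdot)\in\mathcal{N}(Q_0)$. Thus $Q'$ is isomorphic to a principal h-quasigroup of $Q_0$. Conversely, if $Q'$ is isomorphic to a principal h-quasigroup $(Q,\cdot)\in\mathcal{N}(Q_0)\subset\mathcal{M}(Q_0)$, then by Proposition~\ref{prop41} we immediately obtain $Q_0\stackrel{H}{\cong}Q'$; here no extra verification is needed, since $\mathcal{N}(Q_0)\subset\mathcal{M}(Q_0)$ by the inclusion noted just before the statement.

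The main obstacle, if any, is purely the bookkeeping in the forward direction: one must be careful that the isomorphism invariance of the quasigroup axioms is applied to transport the quasigroup structure of $Q'$ back onto $(Q,\cdot)$. Concretely, if $g:Q'\to(Q,\cdot)$ is an isomorphism and the solvability of $a*x=b$, $y*a=b$ holds in $Q'$, then applying $g$ shows the corresponding equations are uniquely solvable in $(Q,\cdot)$; this is routine but is the only place where the hypothesis is actually used. I expect the entire argument to be a short two-line deduction, essentially a remark that Proposition~\ref{prop41} specializes cleanly once the inclusion $\mathcal{M}_I(Q_0)\subset\mathcal{N}(Q_0)\subset\mathcal{M}(Q_0)$ is in hand.
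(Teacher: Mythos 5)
Your proposal is correct and takes essentially the same route as the paper, which simply states that Proposition~\ref{prop51} is concluded from Proposition~\ref{prop41}; your added detail that the quasigroup property transfers across the isomorphism so that the relevant principal h-groupoid lies in $\mathcal{N}(Q_0)$ is exactly the implicit step the paper leaves to the reader.
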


Now, we describe $\mathcal{N}(Q_0)$. For $(x,y),(x',y')\in Q\times Q \setminus K(Q_0)$, we say that $(x,y)\sim (x',y')$ if one of the following holds:

\noindent{}(i) $(x',y') = (y,x)$,
\\
(ii) $x = x'$ and $\{x*y,y*x\}\cap \{x*y',y'*x\}\not = \emptyset$,
\\
(iii) $y = y'$ and $\{x*y,y*x\}\cap \{x'*y,y*x'\}\not = \emptyset$.

We say that $(x,y)\equiv (x',y')$ if there are $z_1,z_2,...,z_l \in Q\times Q \setminus K(Q_0)$ such that $(x,y)\sim z_1 \sim z_2\sim...\sim z_l \sim  (x',y')$.

The relation $\sim$ is reflexive and symmetric, and hence $\equiv$ is an equivalence relation. Denote by $r(Q_0)$ the number of equivalence classes of $\equiv$ on $Q\times Q \setminus K(Q_0)$. 

Suppose that $Q$ is finite and let $\tau = \{(x_1,y_1),(x_2,y_2),...,(x_{r(Q_0)},y_{r(Q_0)})\}$ be a set of representatives of the equivalence classes of $\equiv$ on $Q\times Q \setminus K(Q_0)$. Consider $\mathbb{Z}_2 = \{0,1\}$, and for $\sigma = \{\sigma_1,\sigma_2,...,\sigma_{r(Q_0)}\} \in \mathbb{Z}_2^{r(Q_0)}$, define the operation $\stackrel{\sigma}{\bullet}$ on $Q$ by:

\begin{center}
$x \stackrel{\sigma}{\bullet} y = \left\{\begin{array}{l}
x * y,  \textrm{ if } (x,y)\in K(Q_0) \textrm{ or } (x,y)\equiv (x_i,y_i), \textrm{ where } \sigma_i = 0,\\
y * x,  \textrm{ if } (x,y)\equiv (x_i,y_i), \textrm{ where } \sigma_i = 1.
\end{array}\right.$
\end{center}

Denote $(Q,\stackrel{\sigma}{\bullet})$ by $Q_\sigma$ and let $\mathcal{N}_\tau(Q_0) = \{Q_\sigma \,|\, \sigma  \in \mathbb{Z}_2^{r(Q_0)}\}$. Note that $\mathcal{N}_\tau(Q_0)\subset \mathcal{M}(Q_0)$ and $|\mathcal{N}_\tau(Q_0)| = 2^{r(Q_0)}$.

\begin{teo}
\label{teo51} If $Q$ is finite, then $\mathcal{N}(Q_0) = \mathcal{N}_\tau(Q_0)$. In particular, $|\mathcal{N}(Q_0)| = 2^{r(Q_0)}$.

\end{teo}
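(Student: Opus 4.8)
The plan is to prove the two inclusions $\mathcal{N}_\tau(Q_0)\subseteq \mathcal{N}(Q_0)$ and $\mathcal{N}(Q_0)\subseteq \mathcal{N}_\tau(Q_0)$. Since $\mathcal{N}_\tau(Q_0)\subseteq\mathcal{M}(Q_0)$ has already been observed and every element of $\mathcal{N}(Q_0)$ lies in $\mathcal{M}(Q_0)$ by definition, both inclusions reduce to a single question: among the groupoids in $\mathcal{M}(Q_0)$, which ones are quasigroups, and are they exactly the $Q_\sigma$? I would encode each $(Q,\cdot)\in\mathcal{M}(Q_0)$ by its \emph{sign function} $s$, where $s(x,y)=0$ if $x\cdot y=x*y$ and $s(x,y)=1$ if $x\cdot y=y*x$. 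This is well defined off $K(Q_0)$ by \eqref{eq41}; it is forced to equal $0$ on $K(Q_0)$ (there $\{x\cdot y,y\cdot x\}=\{x*y\}$ gives $x\cdot y=x*y$); and it satisfies $s(x,y)=s(y,x)$ because $\{x\cdot y,y\cdot x\}=\{x*y,y*x\}$ with $x*y\neq y*x$. The goal then becomes the equivalence: $(Q,\cdot)$ is a quasigroup if and only if $s$ is constant on every $\equiv$-class, which is exactly the condition that $(Q,\cdot)=Q_\sigma$ with $\sigma_i=s(x_i,y_i)$.

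The technical heart is a row/column bijectivity criterion. For fixed $x$, let $R_x(y)=x*y$ and $C_x(y)=y*x$; both are bijections of $Q$ since $Q_0$ is a quasigroup, and I set $\theta_x=R_x^{-1}C_x$, so $\theta_x(y)$ is the unique element with $x*\theta_x(y)=y*x$. Writing $A=\{y\mid s(x,y)=1\}$, the $x$-th row $y\mapsto x\cdot y$ equals $R_x$ on the complement $A^c$ and $R_x\circ\theta_x$ on $A$; factoring out the bijection $R_x$, this row is a bijection if and only if the map that is the identity on $A^c$ and $\theta_x$ on $A$ is a bijection, which in the finite case holds if and only if $\theta_x(A)=A$. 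Hence \textbf{row $x$ is a bijection iff $\{y\mid s(x,y)=1\}$ is a union of $\theta_x$-orbits}, equivalently iff $s(x,\cdot)$ is constant on each $\theta_x$-orbit. The symmetric argument with left and right translations by $y$ yields the analogous statement for columns via the permutation $\phi_y$ defined by $\phi_y(x)*y=y*x$.

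Next I would match these orbit conditions to the relation $\sim$. Unwinding condition (ii) with $y\neq y'$ (the case $y=y'$ being trivial, and the equalities $x*y=x*y'$ and $y*x=y'*x$ impossible by cancellation in $Q_0$), the intersection $\{x*y,y*x\}\cap\{x*y',y'*x\}\neq\emptyset$ reduces to $x*y=y'*x$ or $y*x=x*y'$, which by the definition of $\theta_x$ is precisely $y'=\theta_x^{-1}(y)$ or $y'=\theta_x(y)$. Thus, for a fixed first coordinate $x$, the pairs $\sim$-related through (ii) are exactly the $\theta_x$-neighbours, so the $\equiv$-class of $(x,y)$ restricted to row $x$ is the whole $\theta_x$-orbit of $y$; condition (iii) plays the same role for $\phi_y$-orbits in columns, and condition (i) records the forced symmetry $s(x,y)=s(y,x)$. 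Combining this with the bijectivity criterion of the previous step, $(Q,\cdot)$ is a quasigroup iff $s$ is constant on every $\theta_x$-orbit and every $\phi_y$-orbit, iff $s$ is constant on every $\equiv$-class. This gives both inclusions simultaneously, and since distinct $\sigma$ yield operations differing at $(x_i,y_i)\notin K(Q_0)$, the map $\sigma\mapsto Q_\sigma$ is injective, so $|\mathcal{N}(Q_0)|=|\mathcal{N}_\tau(Q_0)|=2^{r(Q_0)}$.

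The main obstacle I anticipate is the bijectivity criterion and its clean translation into orbit language: one must carefully track how swapping the entry at $(x,y)$ from $x*y$ to $y*x$ permutes the values within a row, and verify that finiteness upgrades the invariance $\theta_x(A)\subseteq A$ to $\theta_x(A)=A$. A secondary care point is the bookkeeping around $K(Q_0)$ — checking that the fixed points of $\theta_x$ are exactly the commuting pairs (which carry no choice and constitute trivial orbits), so they never obstruct the orbit decomposition — together with confirming that the $\sim$-adjacencies generated inside a single row or column genuinely chain along the entire orbit rather than merely between two neighbours.
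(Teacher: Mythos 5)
Your argument is correct, but it takes a genuinely different route from the paper's. The paper proves the two inclusions separately and concretely: for $\mathcal{N}_\tau(Q_0)\subseteq\mathcal{N}(Q_0)$ it checks the cancellation laws in $Q_\sigma$ by a four-case analysis, the key point being that a ``mixed'' coincidence $x \stackrel{\sigma}{\bullet} y = x \stackrel{\sigma}{\bullet} y'$ forces $(x,y)\sim(x,y')$ and hence a common orientation unless $(x,y')\in K(Q_0)$; for the reverse inclusion it fixes $\sigma$ on the representatives and propagates the identity $x\cdot y = x\stackrel{\sigma}{\bullet} y$ step by step along $\sim$-chains, using cancellation in the quasigroup $Q'$. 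You instead establish a single characterization --- an element of $\mathcal{M}(Q_0)$ is a quasigroup if and only if its sign function is constant on the $\equiv$-classes --- from which both inclusions and the count fall out at once. The new ingredient is the pair of permutations $\theta_x=R_x^{-1}C_x$ and $\phi_y$, which appear nowhere in the paper: your observations that a row (resp.\ column) is bijective precisely when the set of flipped entries is a union of $\theta_x$- (resp.\ $\phi_y$-) orbits, and that $\sim$-adjacency inside a row or column is exactly adjacency under these permutations, explain structurally \emph{why} the relation $\sim$ is defined the way it is. The underlying combinatorial facts are the same in both proofs (cancellation in $Q_0$ kills two of the four possible equalities in condition (ii); finiteness upgrades injectivity to bijectivity, matching the paper's reliance on the finiteness of $Q$ both for the cancellation criterion and for terminating the chain propagation), but your orbit formulation is more conceptual and unifies the two directions, while the paper's version is more elementary and avoids auxiliary permutations. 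Your side remarks --- that the fixed points of $\theta_x$ are exactly the pairs in $K(Q_0)$, and that distinct $\sigma$ give operations differing at some $(x_i,y_i)\notin K(Q_0)$ --- correctly handle the two bookkeeping points needed to make the count $|\mathcal{N}(Q_0)|=2^{r(Q_0)}$ exact.
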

\begin{proof}

Let $Q_\sigma \in \mathcal{N}_\tau(Q_0)$. Since $Q$ is finite, in order to prove that $Q_\sigma$ is a quasigroup, we only need to show that the cancellation laws are satisfied, that is, $x \stackrel{\sigma}{\bullet} y = x \stackrel{\sigma}{\bullet} y' \Rightarrow y = y'$ and $x \stackrel{\sigma}{\bullet} y = x' \stackrel{\sigma}{\bullet} y \Rightarrow x = x'$. 

Let $x,y,y'\in Q$ be such that $x \stackrel{\sigma}{\bullet} y = x \stackrel{\sigma}{\bullet} y'$. If $(x,y)\in K(Q_0)$, then \mbox{$x*y = y*x \in $} $ \{x*y',y'*x\}$, and hence $y = y'$. Now suppose that $(x,y)\not \in K(Q_0)$. We have four possibilities:

\noindent{}(i) $x \stackrel{\sigma}{\bullet} y = x*y$ and $x \stackrel{\sigma}{\bullet} y' = x*y'$,
\\
(ii) $x \stackrel{\sigma}{\bullet} y = y*x$ and $x \stackrel{\sigma}{\bullet} y' = y'*x$,
\\
(iii) $x \stackrel{\sigma}{\bullet} y = x*y$ and $x \stackrel{\sigma}{\bullet} y' = y'*x$,
\\
(iv) $x \stackrel{\sigma}{\bullet} y = y*x$ and $x \stackrel{\sigma}{\bullet} y' = x*y'$.

In (i) and (ii), it is immediately seen that $y = y'$.

For (iii) and (iv), we have  $(x,y)\sim (x,y')$. Hence, there exists $(x_i,y_i)\in \tau$ such that $(x,y) \equiv (x_i,y_i)$ and $(x,y') \equiv (x_i,y_i)$. By definition of $\stackrel{\sigma}{\bullet}$, we have either $x \stackrel{\sigma}{\bullet} y = x*y$ and $x \stackrel{\sigma}{\bullet} y' = x*y'$, or $x \stackrel{\sigma}{\bullet} y = y*x$ and $x \stackrel{\sigma}{\bullet} y' = y'*x$. Since $(x,y)\not \in K(Q_0)$, it follows that $(x,y')\in K(Q_0)$. Similarly to the case $(x,y)\in K(Q_0)$, one can conclude that $y = y'$.

Thus, the cancellation law $x \stackrel{\sigma}{\bullet} y = x \stackrel{\sigma}{\bullet} y' \Rightarrow y = y'$ holds in $Q_\sigma$. The second cancellation law can be proven similarly. Therefore, $Q_\sigma \in \mathcal{N}(Q_0)$.

Conversely, let $Q' = (Q,\cdot) \in \mathcal{N}(Q_0)$. Then, there exists $\sigma \in \mathbb{Z}_2^{r(Q_0)}$ such that $x_i\cdot y_i = x_i \stackrel{\sigma}{\bullet} y_i$, for any $(x_i,y_i)\in \tau$. For $(x,y)\in K(Q_0)$, it is vividly deduced that $x\cdot y = x \stackrel{\sigma}{\bullet} y$. 

Consider $(x_i,y_i)\in \tau$. Then, $y_i\cdot x_i = y_i \stackrel{\sigma}{\bullet} x_i$. Let $(x,y) \in Q\times Q\setminus \{(x_i,y_i),(y_i,x_i)\}$ such that $(x,y)\sim (x_i,y_i)$. By \eqref{eq41} and the definition of $\stackrel{\sigma}{\bullet}$, we have $x\cdot y \not = x_i\cdot y_i = x_i\stackrel{\sigma}{\bullet}  y_i$ and $x \stackrel{\sigma}{\bullet} y \not = x_i\stackrel{\sigma}{\bullet}  y_i$, and therefore the only possibility is $x\cdot y  = x \stackrel{\sigma}{\bullet} y$. For every $(x,y)\sim (x_i,y_i)$, one can use the previous arguments and result in $x'\cdot y'  = x' \stackrel{\sigma}{\bullet} y'$, for all $(x',y')\sim (x,y)$. Since $Q$ is finite, this procedure must end at some point, and hence  $x\cdot y  = x \stackrel{\sigma}{\bullet} y$, for all $(x,y)\equiv (x_i,y_i)$. As a result, we have $Q' = Q_\sigma$.
\end{proof}

By Proposition~\ref{prop42}, if $Q$ is finite, then $r(Q_0)\leq (|Q|^2 - |K(Q_0)|)/2$. The next proposition provides a better estimate for $r(Q_0)$. According to this result, it is seen that $|\mathcal{N}(Q_0)|$ is much smaller that $|\mathcal{M}(Q_0)|$.

\begin{prop}
\label{prop52a} If $Q$ is finite, then $r(Q_0)\leq (|Q|^2 - |K(Q_0)|)/6$ and $|\mathcal{N}(Q_0)|\leq  \sqrt[3]{|\mathcal{M}(Q_0)|}$. In particular, $|\mathcal{M}(Q_0)| \geq 8$.
\end{prop}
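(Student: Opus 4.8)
The plan is to reduce everything to a single combinatorial claim: that each equivalence class of $\equiv$ on $Q\times Q\setminus K(Q_0)$ contains at least six elements. Writing $n = |Q|^2 - |K(Q_0)| = |Q\times Q\setminus K(Q_0)|$, such a bound on class sizes immediately gives $r(Q_0)\le n/6$, since the classes partition a set of size $n$. Both displayed inequalities would then follow at once: by Theorem~\ref{teo51} and Proposition~\ref{prop42} one has $|\mathcal{N}(Q_0)| = 2^{r(Q_0)}$ and $|\mathcal{M}(Q_0)| = 2^{n/2}$, so $r(Q_0)\le n/6$ yields $|\mathcal{N}(Q_0)| = 2^{r(Q_0)} \le 2^{n/6} = \sqrt[3]{2^{n/2}} = \sqrt[3]{|\mathcal{M}(Q_0)|}$. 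For the last assertion, noncommutativity of $Q_0$ provides at least one non-commuting pair, hence at least one class, which forces $n\ge 6$ and therefore $|\mathcal{M}(Q_0)| = 2^{n/2}\ge 8$.

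To prove the class-size claim, I would fix an arbitrary non-commuting pair $(x,y)$ and set $a = x*y$ and $b = y*x$, so that $a\ne b$ and in particular $x\ne y$. Using that $Q$ is a quasigroup, let $u$ be the unique element with $x*u = b$ and let $u'$ be the unique element with $u'*y = b$. I then exhibit the six pairs $(x,y)$, $(y,x)$, $(x,u)$, $(u,x)$, $(u',y)$, $(y,u')$ and check that all of them belong to the equivalence class of $(x,y)$: the pair $(x,u)$ is related to $(x,y)$ by relation (ii), since $x*u = b\in\{a,b\}=\{x*y,y*x\}$; the pair $(u',y)$ is related to $(x,y)$ by relation (iii), since $u'*y = b$; and the three remaining pairs are obtained from $(x,y)$, $(x,u)$ and $(u',y)$ by the swap relation (i).

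The heart of the argument, and the step I expect to be the main obstacle, is verifying that these six pairs are genuinely distinct and lie in the common domain $Q\times Q\setminus K(Q_0)$; both facts are handled by the cancellation laws of $Q$. For instance, $x*u = b\ne a = x*y$ forces $u\ne y$ by left cancellation, and were $(x,u)\in K(Q_0)$ we would have $u*x = x*u = b = y*x$, hence $u = y$ by right cancellation, a contradiction; thus $(x,u)\notin K(Q_0)$, and since $(z,z)\in K(Q_0)$ always, also $u\ne x$. The symmetric identities $u'*y = b\ne a = x*y$ and $u'*y = y*u'$ (the latter holding if $(u',y)\in K(Q_0)$) give $u'\notin\{x,y\}$ and $(u',y)\notin K(Q_0)$ in the same way. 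With $u,u'\notin\{x,y\}$ in hand, comparing first and second coordinates shows the six pairs are pairwise distinct, the only non-immediate comparisons (such as $(u,x)$ against $(y,u')$) again reducing to $u\ne y$ and $x\ne y$. Hence every class has at least six elements, which completes the reduction and the proof.
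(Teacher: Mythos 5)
Your proof is correct and follows essentially the same route as the paper: both arguments show each $\equiv$-class contains at least six elements by using the quasigroup divisions to produce the pairs $(x,u)$, $(u',y)$ (the paper's $(x,y')$, $(x',y)$) together with their swaps, and then combine $r(Q_0)\le (|Q|^2-|K(Q_0)|)/6$ with Proposition~\ref{prop42}, Theorem~\ref{teo51} and $r(Q_0)\ge 1$. Your write-up merely makes explicit the distinctness and non-membership-in-$K(Q_0)$ checks that the paper leaves implicit.
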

\begin{proof}
Let $(x,y)\in Q\times Q \setminus K(Q_0)$ and $[(x,y)]$ be the equivalence class of $(x,y)$ with respect to $\equiv$. Since $Q_0$ is a quasigroup, there are $x',y' \in Q$ such that $x'\not =x$, $y'\not = y$, $(x',y)\sim (x,y)$, and $(x,y')\sim (x,y)$. We have $x\not = y$, $x'\not = y$ and $x\not = y'$ since $(x,y),(x',y),(x,y')\not \in K(Q_0)$. Thus, $|[(x,y)]|\geq |\{(x,y),(x',y),(x,y'),(y,x),(y,x'),(y',x)\}| = 6$. Hence, $|Q\times Q \setminus K(Q_0)|\geq 6 \,r(Q_0)$. The rest of the claim follows from Proposition~\ref{prop42}, Theorem~\ref{teo51} and the fact that $r(Q_0)\geq 1$.
\end{proof}

If $Q$ is finite and $r(Q_0)$ is small, one can generate all quasigroups of $\mathcal{N}(Q_0)$ computationally. Then, by using Propositions~\ref{prop51} and \ref{prop44} it can be verified if a quasigroup $Q'$ is half-isomorphic to $Q_0$ and generated all elements of $\mathit{Half}(Q_0)$. However, $r(Q_0)$ can be a large number even for groups of small order, and therefore generating all the quasigroups of $\mathcal{N}(Q_0)$ becomes computationally unviable. The next example illustrates both situations. In this example, $r(Q_0)$ and $|\mathcal{M}(Q_0)|$ are obtained by using GAP computing with the LOOPS package \cite{gap,NV1}.

\begin{exem} (a) Let $A_5$ be the alternating group of order $60$. We have that $r(A_5) = 91$, and hence $|\mathcal{N}(A_5)| = 2^{91}$. Furthermore, $|\mathcal{M}(A_5)| = 2^{1650}$. 

\noindent{}(b) The LOOPS package for GAP contains all nonassociative right Bol loops of order $141$ (there are $23$ such loops). The right Bol loops of this order were classified in \cite{KNV}. If $L$ is one of these loops, then $3\leq r(L) \leq 8$, and hence $|\mathcal{N}(L)| \leq 256$. Furthermore, $|\mathcal{M}(L)| \geq 2^{5405}$.\qed

\end{exem}

By Proposition~\ref{prop21}, every quasigroup half-isomorphic to a loop is also a loop. Consequently, the same results as those presented for quasigroups in this section can be proven for loops. For more structured classes of loops, as it is seen in the following result, one can provide more information about the loops of $\mathcal{N}(L)$.

\begin{prop}
\label{prop52}
Let $G$ be a finite noncommutative group. Then, $|\mathcal{M}_I(G)| = 2$.
\end{prop}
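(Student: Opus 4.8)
The plan is to pin down the two obvious members of $\mathcal{M}_I(G)$ and then use Scott's theorem to show that nothing else can occur. Write $*$ for the operation of $G$ and, following the notation of Proposition~\ref{prop44} with $G_0 = G$, let $G^T = (G,\cdot)$ denote the opposite, $x\cdot y = y*x$. Clearly $G\in \mathcal{M}_I(G)$, since the identity map is an isomorphism of $G$ onto itself. For $G^T$, I would observe that the inversion map $x\mapsto x^{-1}$ is an anti-automorphism of $G$, because $(y*x)^{-1} = x^{-1}*y^{-1}$ holds in every group; hence $G$ has an anti-automorphism, and Proposition~\ref{prop44} yields $G^T\in \mathcal{M}_I(G)$. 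Finally, $G$ and $G^T$ are distinct elements of $\mathcal{M}(G)$: by the description of equality in $\mathcal{M}(G)$ following Proposition~\ref{prop41}, they coincide only if $x*y = y*x$ for all $x,y$, which is impossible since $G$ is noncommutative. This gives $|\mathcal{M}_I(G)|\geq 2$.

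For the reverse inequality, I would take an arbitrary $(G,\cdot)\in \mathcal{M}_I(G)$ and show it must be $G$ or $G^T$. By the very definition of $\mathcal{M}_I$, the groupoid $(G,\cdot)$ is isomorphic to $G$ and is therefore itself a finite group. At the same time, since $(G,\cdot)\in \mathcal{M}(G)$, the identity map $I_d\colon (G,*)\to (G,\cdot)$ is a half-isomorphism: indeed \eqref{eq41} gives $\{x\cdot y,y\cdot x\} = \{x*y,y*x\}$, so $I_d(x*y) = x*y\in \{x\cdot y,y\cdot x\}$ for all $x,y$. Thus $I_d$ is a half-isomorphism between two groups.

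Now I would invoke Scott's theorem \cite{Sco57}, that every half-isomorphism between groups is trivial. Applied to $I_d$, it forces $I_d$ to be either an isomorphism or an anti-isomorphism from $(G,*)$ onto $(G,\cdot)$. In the first case $x\cdot y = x*y$ for all $x,y$, so $(G,\cdot) = G$; in the second case $x\cdot y = y*x$ for all $x,y$, so $(G,\cdot) = G^T$. Hence $\mathcal{M}_I(G)\subseteq \{G,G^T\}$, and together with the first paragraph we conclude $|\mathcal{M}_I(G)| = 2$.

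The one point that really needs care is the applicability of Scott's theorem, whose hypothesis requires both the domain and the codomain to be groups; this is precisely why restricting to $\mathcal{M}_I(G)$ rather than to $\mathcal{M}(G)$ is essential, as it is the isomorphism $(G,\cdot)\cong G$ that makes the target a group. Everything else is routine bookkeeping. As an alternative route for the upper bound one could argue through Proposition~\ref{prop43}(e): since $|\mathcal{M}_I(G)| = [\mathit{Half}_S(G):Aut(G)]$, and Scott's theorem gives $\mathit{Half}_S(G) = \mathit{Half}_{T}(G)$, while Proposition~\ref{prop44} shows $\mathit{Half}_{T}(G) = Aut(G)\cup Ant(G)$ is the disjoint union of $Aut(G)$ with a single coset, the index equals $2$; but the direct argument above is cleaner.
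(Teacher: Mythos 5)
Your proof is correct. The upper bound is where you genuinely diverge from the paper: the paper never looks at individual elements of $\mathcal{M}_I(G)$, but instead counts them via Proposition~\ref{prop43}(e), arguing that Scott's theorem gives $\mathit{Half}(G)=\mathit{Half}_T(G)$, that inversion is an anti-automorphism so $|\mathit{Half}(G)|=2|Aut(G)|$ by Proposition~\ref{prop44}, and hence that the index $[\mathit{Half}_S(G):Aut(G)]$ equals $2$ --- i.e.\ the paper's proof is precisely the ``alternative route'' you sketch in your last sentences. Your main argument instead applies Scott's theorem directly to the identity map $I_d\colon (G,*)\to (G,\cdot)$, which is legitimate because membership in $\mathcal{M}_I$ guarantees $(G,\cdot)$ is itself a group, and this pins down $(G,\cdot)\in\{G,G^T\}$ explicitly. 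What your route buys is concreteness (you exhibit the two principal h-groupoids rather than merely counting them) and independence from the coset machinery of Section~4; it also works verbatim without the finiteness hypothesis, since Scott's theorem does not require it. What the paper's route buys is that it generalizes mechanically to the loop classes discussed right after the proposition, where one only knows $\mathit{Half}(L)=\mathit{Half}_T(L)$ and the existence of an anti-automorphism, without having an analogue of ``a groupoid isomorphic to $L$ in $\mathcal{M}(L)$ inherits enough structure for Scott-type rigidity of $I_d$.'' Both arguments are sound.
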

\begin{proof}
From Scott's result \cite{Sco57}, we have $\mathit{Half}(G) = \mathit{Half}_T(G)$. Since $G$ is noncommutative, the mapping $J: G \to G$, defined by $J(x) = x^{-1}$, is an anti-automorphism of $G$. By Proposition~\ref{prop44}, we have $|\mathit{Half}(G)| = 2|Aut(G)|$. Thus, the claim follows from Proposition~\ref{prop43}.
\end{proof}

In fact, the previous proposition can be extended to any noncommutative loop that has an anti-automorphism and where every half-automorphism is trivial, such as the noncommutative loops of the subclass of Moufang loops in \cite[Thereom 1.4]{KSV16}, which include the noncommutative Moufang loops of odd order \cite{GG12}. Notice that this result cannot be extended even to all Moufang loops. In \cite[Example 4.6]{G20}, a noncommutative Moufang loop $L$ of order $16$ is given for which $|\mathcal{M}_I(L)| = [\mathit{Half}(L):Aut(L)] = 16.$

\section{A construction of a non-special half-automorphism}
\label{sl}
Let $G$ be a nonempty set with binary operations $*$ and $\cdot$ such that there exists a non-special half-isomorphism $f:(G,*) \rightarrow (G,\cdot)$. Define $G_\infty = \prod_{i=1}^\infty G$. The elements of $G_\infty$ will be denoted by $(x_i) = (x_i)_{i=1}^\infty$, where $x_i \in G$, for all $i$. For $(x_i),(y_i)\in G_\infty$, define the operation $(x_i)\bullet (y_i) = (z_i)$, where

\begin{center}
$z_j = \left\{\begin{array}{l}
x_j*y_j,  \textrm{ if } j \textrm{ is odd}, \\
x_j\cdot y_j,  \textrm{ if } j \textrm{ is even.} \\
\end{array}\right.$
\end{center}

Then, $(G_\infty,\bullet)$ is a groupoid. It is easy to see that if $(G,*)$ and $(G,\cdot)$ are quasigroups (loops), then $(G_\infty,\bullet)$ is also a quasigroup (loop). Define the mapping $\phi: G_\infty\rightarrow G_\infty$ by $\phi(x_i) = (y_i)$, where

\begin{center}
$y_j = \left\{\begin{array}{l}
f(x_1),  \textrm{ if } j = 2, \\
x_{j+2},  \textrm{ if } j \textrm{ is odd}, \\
x_{j-2},  \textrm{ if } j>2 \textrm{ and } j \textrm{ is even.} \\
\end{array}\right.$
\end{center}

Thus, $\phi$ is a bijection and in each entry of $(x_i)$ it behaves like an isomorphism or a half-isomorphism. Hence, $\phi$ is a half-automorphism of $G_\infty$. Since $f$ is a non-special half-isomorphism, there are $x,y \in G$ such that $f^{-1}(x\cdot y) \not \in \{f^{-1}(x)*f^{-1}(y),f^{-1}(y)*f^{-1}(x)\}$. Then,

\begin{center}
$\phi^{-1}((x)_{i=1}^\infty\bullet(y)_{i=1}^\infty) \not \in \{\phi^{-1}((x)_{i=1}^\infty)\bullet \phi^{-1}((y)_{i=1}^\infty),\phi^{-1}((y)_{i=1}^\infty)\bullet \phi^{-1}((x)_{i=1}^\infty)\}$.
\end{center}
 
Therefore, $\phi$ is a non-special half-automorphism of $G_\infty$.

In example~\ref{ex1}, we have loops $C_6 = (G,*)$ and $L=(G,\cdot)$ for the conditions above, hence the loop $G_\infty$ has a non-special half-automorphism. Note that $\mathit{Half}(G_\infty)$ is not a group.

In the following example, a non-special half-isomorphism between a right Bol loop and a group is provided. This example is obtained by using MACE4 \cite{mace}.

\begin{exem} 
\label{ex61}
Let $G = \{1,2,...,8\}$ and consider the following Cayley tables of $(G,*)$ and $(G,\cdot)$:

\begin{center}

\begin{minipage}{.4\textwidth}
\centering

\begin{tabular}{c|cccccccc}
$*$&1&2&3&4&5&6&7&8\\
\hline
1&1&2&3&4&5&6&7&8\\
2&2&1&4&6&3&5&8&7\\
3&3&4&1&2&7&8&5&6\\
4&4&3&2&8&1&7&6&5\\
5&5&6&7&1&8&2&3&4\\
6&6&5&8&7&2&1&4&3\\
7&7&8&5&3&6&4&1&2\\
8&8&7&6&5&4&3&2&1\\
\end{tabular}

\end{minipage} 
 \begin{minipage}{.4\textwidth}
\centering
\begin{tabular}{c|cccccccc}
$\cdot$&1&2&3&4&5&6&7&8\\
\hline
1&1&2&3&4&5&6&7&8\\
2&2&1&4&3&6&5&8&7\\
3&3&4&1&2&7&8&5&6\\
4&4&3&2&1&8&7&6&5\\
5&5&7&6&8&1&3&2&4\\
6&6&8&5&7&2&4&1&3\\
7&7&5&8&6&3&1&4&2\\
8&8&6&7&5&4&2&3&1\\
\end{tabular}

\end{minipage}

\end{center}

We have $(G,*) = L$ as a right Bol loop and $(G,\cdot)$ being isomorphic to $D_8$, which is the dihedral group of order $8$. The permutation $f = (3 \, 5 \, 7)(4\, 6\, 8)$ of $G$ is a half-isomorphism of $L$ into $D_8$. Since $|K(L)| = 56$ and $|K(D_8)| = 40$, $f$ is a non-special half-isomorphism by Theorem~\ref{teo31}. Since $L$ and $D_8$ are right Bol loops, $G_\infty$ is also a right Bol loop, and from the previous construction we have a non-special half-automorphism in a right Bol loop of infinite order.\qed
\end{exem}

\section*{Acknowledgments}
Some calculations in this work have been made by using the finite model builder MACE4, developed by McCune \cite{mace}, and the LOOPS package \cite{NV1} for GAP \cite{gap}.

\addcontentsline{toc}{section}{Acknowledgments}

\end{document}